\newtheorem*{conj*}{Conjecture}
\newtheorem{theorem}{Theorem}[section]
\theoremstyle{definition}
\theoremstyle{plain}
\newtheorem{lemma}[theorem]{Lemma}
\newtheorem*{theorem*}{Theorem}
\newcommand{\Z}{\mathbb{Z}}
\newcommand{\R}{\mathbb{R}}
\newcommand{\N}{\mathbb{N}}
\newcommand{\SL}{\operatorname{SL}}
\newcommand{\C}{\mathbb{C}}
\newcommand{\re}[1]{\text{Re}\(#1\)}
\renewcommand{\pmod}[1]{\,\,({\rm mod}\,\,{#1})}
\numberwithin{equation}{section}
\newtheoremstyle{example}
  {\topsep}   
  {\topsep}   
  {\normalfont}  
  {0pt}       
  {\bfseries} 
  {.}         
  {5pt plus 1pt minus 1pt} 
  {}          
\theoremstyle{example}
\newtheorem*{example}{Example}
\def\({\left(}
\def\){\right)}
\begin{document}
\title{Exact formula for 1-lower run overpartitions}
\author{Lukas Mauth}
\address{Department of Mathematics and Computer Science\\Division of Mathematics\\University of Cologne\\ Weyertal 86-90 \\ 50931 Cologne \\Germany}
\email{lmauth@uni-koeln.de}

\makeatletter
\@namedef{subjclassname@2020}{%
	\textup{2020} Mathematics Subject Classification}
\makeatother

\subjclass[2020]{11B57, 11F03, 11F20, 11F30, 11F37, 11P82}
\keywords{Circle Method, $\eta$-function, partitions}

\begin{abstract} 
We are going to show an exact formula for lower $1$-run overpartitions. The generating function is of mixed mock-modular type with an overall weight of $0.$ We will apply an extended version of the classical Circle Method. The approach requires bounding modified Kloosterman sums and Mordell integrals.
\end{abstract}

\maketitle

\section{Introduction and statement of results}
An {\it overpartition} \cite{CorteelLovejoy} of a non-negative integer $n$ is a partition of $n$ in which the final occurrence of a number may be overlined. We denote following \cite{CorteelLovejoy} the number of partitions of $n$ by $p(n)$ and the number of overpartitions of $n$ by $\overline{p}(n).$ It is well known that their generating functions are 

\begin{align*}
P(q) &\coloneqq \sum_{n=0}^{\infty} p(n)q^n = \prod_{n=1}^{\infty} \frac{1}{1-q^n} = \frac{1}{(q;q)_{\infty}}, \\
\overline{P}(q) &\coloneqq \sum_{n=0}^{\infty} \overline{p}(n) q^n = \prod_{n=1}^{\infty} \frac{1+q^n}{1-q^n} = \frac{(-q;q)_{\infty}}{(q;q)_{\infty}},
\end{align*}

\noindent
where for $a \in \C$ and $n \in \N_0 \cup \{\infty\}$ we let $(a)_n\coloneqq (a;q)_n \coloneqq \prod_{k=0}^{n-1} (1-aq^k)$ denote the $q$-Pochhammer symbol. These generating functions are essentially special cases of meromorphic modular forms, so called {\it eta-quotients}. Classical questions are to determine asymptotics or exact formulas for the Fourier coefficents of those modular forms. As mentioned in \cite{BringmannMahlburg} the Hardy$-$Ramanujan Tauberian Theorem \cite{HardyRamanujan} for eta-quotients gives the following well-known asymptotics as $n \rightarrow \infty$

\begin{align*}
p(n) &\sim \frac{1}{4\sqrt{3}n}e^{\pi\sqrt{\frac{2n}{3}}}, \\
\overline{p}(n) &\sim \frac{1}{8n}e^{\pi \sqrt{n}}.
\end{align*}

Later Rademacher perfected the Circle Method developed by Hardy and Ramanujan and obtained an exact formula for $p(n)$ \cite{RademacherExact}. To state Rademacher's result we define the {\it Kloosterman sums} \cite{RademacherExact, RademacherZuckerman} $$A_k(n) := \sum_{\substack{0 \leq h < k \\ \gcd(h,k)=1}} \omega_{h,k}e^{\frac{-2\pi i n h}{k}},$$

\noindent
where $\omega_{h,k}$ is defined in Section $2.$ Let $I_\kappa$ denote the modified Bessel function of order $\kappa$ \cite{Temme}. Rademacher's exact formula for $p(n)$ can then be stated in the following form \cite{RademacherZuckerman}

$$p(n) = \frac{2\pi}{(24n-1)^{3/4}} \sum_{k=1}^{\infty} \frac{A_k(n)}{k} I_{\frac{3}{2}}\left(\frac{\pi\sqrt{24n-1}}{6k}\right).$$

In \cite{Zuckerman} Zuckerman generalized the Circle Method to provide exact formulas for the Fourier coefficients at any cusp for arbitrary weakly holomorphic modular forms of negative weight for finite index subgroups of the modular group $\SL_2(\Z).$ It is evident that modularity is the work horse in those methods. However, one could ask if the Cirlce Method can be used to find exact formulas for the Fourier coefficients of objects that still have some automorphic structure. Natural candidates are {\it mock theta functions}. Important examples are Ramanujan's third order mock theta functions (see \cite{BringmannFolsomOnoRolen})

\begin{align*}
f(q) & \coloneqq \sum_{n \geq 0} \frac{q^{n^2}}{(-q;q)_n^2}, \\
\varphi(q) &\coloneqq \sum_{n \geq 0} \frac{q^{n^2}}{(-q^2;q^2)_n},\\
\omega(q) &\coloneqq \sum_{n \geq 0} \frac{q^{2n(n+1)}}{(q;q^2)_{n+1}^2}.
\end{align*}

These are not modular forms, but by groundbreaking work of Zwegers \cite{Zwegers} can be understood as holomorphic parts of harmonic Maas forms of weight $\frac{1}{2}$ \cite{BringmannFolsomOnoRolen}. Bringmann and Ono \cite{BringmannOno} have found exact formulas for the coefficients of harmonic Maas forms of weight $< \frac{1}{2}$. This includes all the above examples but their technique does not use the Cirlce Method and rather relies on Poincaré series. Another class of automorphic objects which is not covered by the above cases is the space of modular forms tensored with harmonic Maass forms. Following the definition in \cite{BringmannFolsomOnoRolen} we are interested in the holomorphic parts of these objects, which are known as mixed-mock modular forms (see \cite{BringmannFolsomOnoRolen} for more details). It is unlikely that a basis of Poincaré series exists for this space, so the Circle Method is the method of choice to study the asymptotic behaviour of the coeffcients of these objects. In \cite{BringmannMahlburg} Bringmann and Mahlburg worked out a technique based on the classical Hardy-Ramanujan Circle Method to obtain asymptotics for the Fourier coeffcients and was later refined by Bridges and Bringmann \cite{BridgesBringmann} to provide exact formulas of Rademacher-type. They illustrated the method for the function $p_2(n)$ which counts partitions without sequences in its parts and it is the first example of an exact formula for the Fourier coefficients of a mixed-mock modular form of weight $0.$ We follow their developed method \cite{BridgesBringmann, BringmannMahlburg} closely to study {\it lower $1$-run overpartitions}. Introduced by Bringmann, Holroyd, Mahlburg, and Vlasenko \cite{BringmannHolroydMahlburgVlasenko} lower $1$-run overpartitions are defined as overpartitons in which any overlined part must occur within a run of exactly $k$ consecutive overlined parts that end below with a gap, where an overpartition is said to have a {\it gap} at $m$ if there are no parts of size $m.$ We will denote following \cite{BringmannHolroydMahlburgVlasenko} the lower $1$-run overpartitions of $n$ by $\overline{p_1}(n).$

\begin{example}
	The lower $1$-run overpartitions of size $4$ are
	
	$$\overline{4},\quad \overline{3} + \overline{1},\quad \overline{3}+1, \quad 3 + \overline{1},\quad 2 + \overline{2},\quad 2 + 1 + \overline{1},\quad 1 + 1 + 1 + \overline{1},$$
	
	\noindent
	together with the $5$ partitions of $4,$ thus $\overline{p_1}(4) = 7 + 5 = 12.$
	
\end{example}

The generating function for $\overline{p_1}(n)$ was worked by Bringmann, Holroyd, Mahlburg, and Vlasenko \cite{BringmannHolroydMahlburgVlasenko} (note that there is a typo in their statement)

$$G_1(q) \coloneqq  \frac{(q^4;q^4)_{\infty}}{(q;q)_{\infty}(q^2;q^2)_{\infty}} \varphi(q).$$

\noindent
Using Ingham's Tauberian Theorem \cite{Ingham} they showed the following asymptotic formula \cite{BringmannHolroydMahlburgVlasenko}

\begin{equation}\label{Asymptotics}
\overline{p_1}(n) \sim \frac{\sqrt{5}}{4\sqrt{6}n}e^{\pi\sqrt{\frac{5}{6}n}}, \quad (n\rightarrow \infty).
\end{equation}

\noindent
The generating function $G_1(q)$ is a mixed-mock modular form of overall weight $0$ and thus falls into the preivously mentioned framework developed by Bringmann and Bridges \cite{BridgesBringmann}. We will follow the approach of \cite{BridgesBringmann, BringmannMahlburg} closely to obtain an following exact formula for $\overline{p_1}(n)$. Many of the computations in the proof will be completely analogous to \cite{BridgesBringmann}. By using a similar structure to \cite{BridgesBringmann} we designed this article in such a way that the proofs for the exact formulas of Rademacher-type for $p_2(n)$ (in \cite{BridgesBringmann}) and $\overline{p_1}(n)$ can be easily compared. Our main result reads as follows.

\begin{theorem}\label{maintheorem}
	We have for $n \geq 1,$
	
	\begin{align*}
	\overline{p_1}(n) &= \frac{\pi}{12\sqrt{6n}} \sum_{\substack{k\geq 1 \\ \gcd(4,k)=1}} \frac{1}{k^2} \sum_{\nu \pmod{k}} (-1)^{n+\nu} K_k^{[12]}(\nu,n) \mathcal{I}_{\frac{1}{24},k,\nu}(n)\\
	&+ \frac{5\pi}{12\sqrt{6n}} \sum_{\substack{k\geq 1 \\ \gcd(4,k)=2}} \frac{1}{k^2} \sum_{\nu \pmod{k}} (-1)^{n+\nu} K_k^{[22]}(\nu,n) \mathcal{I}_{\frac{5}{12},k,\nu}(n).
	\end{align*}
\end{theorem}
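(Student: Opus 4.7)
The plan is to apply the refined Circle Method of Bridges and Bringmann~\cite{BridgesBringmann}, starting from Cauchy's integral formula
\begin{equation*}
\overline{p_1}(n) = \frac{1}{2\pi i} \int_{\mathcal{C}} \frac{G_1(q)}{q^{n+1}}\, dq,
\end{equation*}
where $\mathcal{C}$ is a circle centred at the origin of radius $e^{-2\pi/N^2}$ with $N$ eventually of order $\sqrt{n}$. Using a Farey dissection of order $N$, I split $\mathcal{C}$ into arcs $\xi_{h,k}$ around each Farey fraction $h/k$ with $1\le k\le N$, $\gcd(h,k)=1$, and on each arc perform the substitution $q=\exp(2\pi i(h+iz)/k)$. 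This moves the singularity at $q=e^{2\pi i h/k}$ to $z=0$ and extracts the phase $e^{-2\pi i n h/k}$ from $q^{-(n+1)}$.

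The decisive step is to obtain a transformation of $G_1(q)$ near the cusp $h/k$. The eta-quotient factor $(q^4;q^4)_\infty/((q;q)_\infty(q^2;q^2)_\infty)$ transforms modularly on a congruence subgroup of level $4$, and its cusp behaviour splits cleanly according to $d=\gcd(4,k)\in\{1,2,4\}$; only $d=1$ and $d=2$ contribute a dominant singular exponential, matching the indices $\tfrac{1}{24}$ and $\tfrac{5}{12}$ appearing in the theorem. The factor $\varphi(q)$, being a third-order mock theta function, transforms modularly only after a Zwegers-type non-holomorphic completion; equivalently, on each arc one obtains a modular transformation for $\varphi$ plus an explicit Mordell-integral correction. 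Multiplying the two transformations, the integrand on $\xi_{h,k}$ acquires the shape
\begin{equation*}
(\text{multiplier})\cdot \exp\!\left(\tfrac{\alpha_d}{k^2 z}\right)\cdot \bigl(1+\text{Mordell correction}\bigr)\cdot e^{-2\pi i n h/k}\cdot e^{2\pi n z/k},
\end{equation*}
with $\alpha_1,\alpha_2$ responsible for the two index shifts.

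Summing the multipliers against $e^{-2\pi i n h/k}$ over all $h$ coprime to $k$, further stratified by a residue $\nu\pmod{k}$ arising from the $\varphi$-transformation, produces the modified Kloosterman sums $K_k^{[12]}(\nu,n)$ and $K_k^{[22]}(\nu,n)$. The remaining $z$-integrals over each arc, evaluated by a Hankel contour deformation exactly as in Rademacher's original argument, assemble into the functions $\mathcal{I}_{1/24,k,\nu}(n)$ and $\mathcal{I}_{5/12,k,\nu}(n)$, which package the Bessel-function main term together with the contribution of the Mordell integral inherited from the mock-theta factor. The overall normalisation $\tfrac{\pi}{12\sqrt{6n}}$ and the signs $(-1)^{n+\nu}$ emerge from collecting the constants in the multiplier systems of the eta-quotient.

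The principal obstacle will be controlling the non-dominant contributions, so as to produce an absolutely convergent Rademacher-type series. Following \cite{BridgesBringmann}, this will require (i) quantitative estimates on the modified Kloosterman sums via their multiplicative decomposition and Weil-type bounds, and (ii) saddle-point control of the Mordell integrals along suitably deformed contours in $\C$. In addition, I must verify that the cusps with $d=\gcd(4,k)=4$ contribute only to the error, so that they do not appear in the final formula, and that the error from truncating the Farey dissection at level $N\sim\sqrt{n}$ vanishes as $N\to\infty$. Once these estimates are in place, the two main-arc sums combine, after routine simplifications, into precisely the stated expression for $\overline{p_1}(n)$, with the asymptotic~\eqref{Asymptotics} recovered from the $k=2$ term in the second sum.
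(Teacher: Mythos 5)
Your outline reproduces the general architecture of \cite{BridgesBringmann} but omits the two ideas that actually make the argument close, and one of them is fatal as you have set things up. First, the paper does not apply Cauchy's formula to $G_1(q)$ and transform $\varphi(q)$ by a Zwegers-type completion. It works with the twisted series $\overline{G_1}(q)=\sum(-1)^n\overline{p_1}(n)q^n$, uses Ramanujan's identity $2\varphi(-q)-f(q)=(q;q)_\infty^2/((-q;q)_\infty(q^2;q^2)_\infty)$ to write $\overline{G_1}=g_1+g_2$ with $g_1$ an eta-quotient times $f(q)$ and $g_2$ a pure eta-quotient, and then invokes Andrews' explicit transformation formulas for $f(q)$ (with their Mordell-integral terms $I_{k,\nu}(z)$). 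This is not cosmetic: the Kloosterman sums $K_k^{[12]}$ and $K_k^{[22]}$ in the statement are built from the multipliers of precisely this decomposition, and the factor $(-1)^n$ in the theorem is the untwisting of the $(-1)^n$ you introduced at the start, while $(-1)^\nu$ comes from Andrews' formula. Working with $G_1$ and $\varphi$ directly you would need a transformation law for $\varphi$ at every cusp $h/k$ that is not supplied anywhere, and you could not arrive at the stated Kloosterman sums.

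Second, and more seriously, your claim that ``only $d=1$ and $d=2$ contribute a dominant singular exponential, matching the indices $\tfrac{1}{24}$ and $\tfrac{5}{12}$'' is wrong as stated. In the case $\gcd(4,k)=2$ the purely modular pieces of $g_1$ and $g_2$ each carry the growth factor $e^{\pi/(2kz)}$, and $\tfrac{\pi}{2}>\tfrac{5\pi}{12}$, so these terms strictly dominate the Mordell-integral term that produces $\mathcal{I}_{5/12,k,\nu}$. The proof only survives because these two exponentially growing contributions ($S_{21}$ and $S_{23}$ in the paper) cancel \emph{exactly}, via the multiplier identity
\begin{equation*}
\frac{\omega_{h,\frac{k}{2}}^4}{\omega_{2h,\frac{k}{2}}^2}\,(-1)^{\frac{k}{2}+1}e^{\frac{\pi i h'}{2}\left(1-\frac{3k}{2}\right)}
=-\,\frac{\omega_{h,\frac{k}{2}}^6}{\omega_{h,k}^4\,\omega_{2h,\frac{k}{2}}^2},
\end{equation*}
which must be verified by hand from the definition of $\omega_{h,k}$. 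Without identifying and proving this cancellation, the $d=2$ main term you extract would be wrong (indeed the series would not even converge to the stated formula). You should also note that the surviving main terms come entirely from the Mordell-integral pieces of the mock theta transformation multiplied by the growing eta-quotient exponentials (giving $\mathcal{J}_{1/24,k,\nu}$ and $\mathcal{J}_{5/12,k,\nu}$), not from a ``modular main term plus Mordell correction''; the Bessel function then arises from a residue computation for $\int e^{2\pi n w+2\pi y/(k^2w)}\,dw$ over a small rectangle, with the remaining $x$-integral of the truncated Mordell kernel producing $\mathcal{I}_{b,k,\nu}(n)$. The rest of your plan (Farey dissection, bounding incomplete Kloosterman sums by reduction to classical ones, discarding the $d=4$ cusps) matches the paper.
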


\noindent
The sums $K_k^{[12]}(\nu,n)$ and $K_k^{[22]}(\nu,n)$ are defined in Section 3. Furthermore, we have set here for $b \in \R, k \in \N,$ and $\nu \in \Z$ 

$$\mathcal{I}_{b,k,\nu}(n) := \int_{-1}^{1} \frac{\sqrt{1-x^2}I_1\left(\frac{2\pi}{k}\sqrt{2bn(1-x^2)}\right)}{\cosh\left(\frac{\pi i}{k}\left(\nu - \frac{1}{6}\right)-\frac{\pi}{k}\sqrt{\frac{b}{3}}x\right)} dx.$$ 

\noindent
These integrals should be viewed as a natural result of mixing the Mordell integrals and modular factors in the Circle Method. The Mordell integrals appear in the transformation laws of mock theta functions \cite{BringmannFolsomOnoRolen} and Bessel functions do appear in the exact formulas for eta-quotients \cite{Zuckerman}. In that sense $\mathcal{I}_{b,k,\nu}$ is a natural result of combining Mordell integrals and Bessel functions. As an immediate corollary, we will give another proof of \eqref{Asymptotics}. Our strategy is as follows. In Section $2$ we will work out all modular transformation laws needed for the Circle Method. It is worthwhile to note that we will not work with $G_1(q)$ directly. Instead we are going to twist the generating function by a root of unity and work with 

\begin{equation}\label{GeneratingFunction}
\overline{G_1}(q) \coloneqq \sum_{n=0}^{\infty} (-1)^n\overline{p_1}(n) q^n = \frac{(q^4;q^4)_{\infty}}{(-q;-q)_{\infty}(q^2;q^2)_{\infty}} \varphi(-q).
\end{equation}

\noindent
The main reason for this choice is that the modular properties of \eqref{GeneratingFunction} are easier to work with, than for the other generating function, see Section $2.$ In Section $3$ we are going to give bounds for the Kloosterman sums, and the Mordell integrals that appear in the transformations laws for mock theta functions. Finally, in Section $4$ we are going to apply the Cirlce Method to prove Theorem \ref{maintheorem}.

\section*{Acknowledgements}
The author wishes to thank Kathrin Bringmann for suggesting this problem and helpful discussions. Furthermore, the author thanks Giulia Cesana for advice on rewriting Kloosterman sums and aide to verify the main result numerically. Finally, the author thanks the referee for his careful review, finding several typos in earlier versions of this article, and plenty of suggestions for improvement. The author recieved funding from the European Research Council (ERC) under the European Union’s Horizon 2020 research and innovation programme (grant agreement No. 101001179).

\section{Modularity Properties}

As stated in the introduction we define 
$$P(q):= (q;q)_{\infty}^{-1}.$$

\noindent
For coprime integers $h$ and $k,$ we find $h'$ such that $hh' \equiv -1 \pmod{k}.$ We will assume that whenever $3 \mid k$ the congruence holds $\pmod{3k}$. Aditionally, for all even $k$ we require that the  congruence holds as well $\pmod{16k}$. Furthermore, we define by $z$ a complex variable that satisfies $\re{z}>0$ and $q=e^{\frac{2\pi i}{k}(h+iz)},$ and set $q_1 := e^{\frac{2\pi i}{k}(h'+iz^{-1})}.$ The classical modular transformation law is \cite{Andrews}

\begin{equation}\label{ModularTransformationLaw}
P(q) = \omega_{h,k}z^{\frac{1}{2}}e^{\frac{\pi(z^{-1}-z)}{12k}}P(q_1),
\end{equation}

\noindent
where $\omega_{h,k}$ is the $24k$-th root of unity defined by 
$$\omega_{h,k} \coloneqq \begin{cases}
\left(\frac{-k}{h}\right) e^{-\pi i\left(\frac{1}{4}(2-hk-h) + \frac{1}{12}\left(k-\frac{1}{k}\right)\left(2h-h'+h^2h'\right)\right)} & \text{ if } h \text{ is odd,}  \\

\left(\frac{-h}{k}\right) e^{-\pi i\left(\frac{1}{4}(k-1) + \frac{1}{12}\left(k-\frac{1}{k}\right)\left(2h-h'+h^2h'\right)\right)} & \text{ if } k \text{ is odd,} 

\end{cases}$$

\noindent
and $\left(\frac{\cdot}{\cdot}\right)$ denotes the Kronecker symbol. From \eqref{ModularTransformationLaw} we obtain easily a transformation law for $P(q^r),$ where $r$ is a positive integer. To state the transformation law in a convenient form, we introduce some notation following \cite{BringmannMahlburg}. First, we set $g_r := \gcd(r,k),$ and define the quantities $\rho_r := \frac{r}{g_r}$ and $k_r := \frac{k}{g_r}.$ Note that $\gcd(\rho_r,k_r)=1.$ Finally, we have (see for instace \cite{BringmannMahlburg}) the transformation law

\begin{equation}\label{ModularTransformationLaw2}
P(q^r) = \omega_{h\rho_r,k_r}(\rho_r z)^{\frac{1}{2}} e^{\frac{\pi}{12k_r}\left(\frac{z^{-1}}{\rho_r}-\rho_r z\right)}P\left(\zeta q_1^{\frac{g_r}{\rho_r}}\right).
\end{equation}
where $\zeta \coloneqq \zeta(r, h, k)$ is a certain root of unity whose exact value is irrelevant for our purposes. Therefore, we will assume without loss of generality that $\zeta = 1$.

We are now going to study the necessary transformation laws to sudy \eqref{GeneratingFunction}. We exploit the following linear relation, discovered by Ramanujan \cite{RamanujanCollectedPapers} (see \cite{Watson} for a proof), between the two third-order mock theta functions $\varphi(q)$ and $f(q):$

$$2\varphi(-q) - f(q) = \frac{(q;q)_{\infty}^2}{(-q;q)_{\infty}(q^2;q^2)_{\infty}}.$$

\noindent
We can then rewrite the generating function \eqref{GeneratingFunction} as

$$\overline{G_1}(q) = g_1(q) + g_2(q),$$
where

\begin{align*}
g_1(q) &\coloneqq \frac{(q^4;q^4)_{\infty}^2(q;q)_{\infty}}{2(q^2;q^2)_{\infty}^4}f(q), \\
g_2(q) &\coloneqq \frac{(q^4;q^4)_{\infty}^2 (q;q)_{\infty}^4}{2(q^2;q^2)_{\infty}^6}.
\end{align*}

We start with the transformation law for $g_1(q).$ Define $\xi(q):= \frac{P(q^2)^4}{P(q^4)^2P(q)}.$ We will have to distinguish three cases, depending on the different values of $\gcd(4,k).$ These transformations all follow from iterated application of \eqref{ModularTransformationLaw} and \eqref{ModularTransformationLaw2}.

First, assume that $4|k.$ Then,

$$\xi(q) = \frac{\omega_{h,\frac{k}{2}}^4}{\omega_{h,k}\omega_{h,\frac{k}{4}}^2} z^{\frac{1}{2}} e^{-\frac{\pi}{12k}(z^{-1}-z)}\xi(q_1).$$

Next, if $\gcd(4,k) = 2,$ we have

$$\xi(q) = \frac{\omega_{h,\frac{k}{2}}^4}{2 \omega_{h,k} \omega_{2h,\frac{k}{2}}^2}  z^{\frac{1}{2}} e^{\frac{5\pi}{12kz}+\frac{\pi z}{12k}} \frac{P(q_1^2)^4}{P(q_1)^3}.$$

Finally, if $\gcd(4,k)=1,$

$$\xi(q) = \frac{\omega_{2h,k}^4}{\omega_{h,k}\omega_{4h,k}^2} z^{\frac{1}{2}} e^{\frac{\pi}{24kz} + \frac{\pi z}{12k}} \frac{P\left(q_1^{\frac{1}{2}}\right)^4}{P(q_1)P\left(q_1^{\frac{1}{4}}\right)^2}.$$

Now we turn to the third order mock theta function $f(q).$ Andrews \cite{Andrews1} has shown that $f(q)$ behaves essentially modular of level $2.$ If $k$ is even we have

\begin{align*}
f(q) &= (-1)^{\frac{k}{2}+1}e^{\pi i\left(\frac{h'}{2}-\frac{3h'k}{4}\right)} \omega_{h,k} z^{-\frac{1}{2}} e^{\frac{\pi(z^{-1}-z)}{12k}}f(q_1) \nonumber \\
&+ \omega_{h,k} \frac{2}{k} z^{\frac{1}{2}} e^{-\frac{\pi z}{12k}} \sum_{\nu \pmod{k}} (-1)^{\nu} e^{\frac{\pi i h' (-3\nu^2+\nu)}{k}} I_{k,v}(z),
\end{align*}

\noindent
where $I_{k,\nu}(z)$ is the Mordell-integral defined by 

$$I_{k,\nu}(z) \coloneqq \int_{-\infty}^{\infty} \frac{e^{-\frac{3\pi z x^2}{k}}}{\cosh\left(\frac{\pi i \left(\nu - \frac{1}{6}\right)}{k} - \frac{\pi z x}{k}\right)} dx.$$

\noindent
Note that, as mentioned by Bringmann and Mahlburg in \cite{BringmannMahlburg}, there is a typo regarding the term $e^{\frac{\pi(z^{-1}-z)}{12k}}$ as it is stated in the statement of Theorem $2.2$ in \cite{Andrews1}. We also stated Andrews formula in the same way as Bringmann and Mahlburg did in \cite{BringmannMahlburg} by replacing the $2k$ in Theorem $2.2$ in \cite{Andrews1} by an even $k$. If $k$ is odd we have

\begin{align*}
f(q) &= 2(-1)^{\frac{1}{2}(k-1)} e^{\frac{3\pi i h'}{4k}} \omega_{h,k} z^{-\frac{1}{2}} e^{-\frac{2\pi}{3kz}-\frac{\pi z}{12k}} \omega\left(q_1^{\frac{1}{2}}\right) \nonumber \\
&+ \frac{2}{k} \omega_{h,k} z^{\frac{1}{2}} e^{-\frac{\pi z}{12k}} \sum_{\nu \pmod{k}} (-1)^{\nu} e^{\frac{\pi i h' (-3\nu^2-\nu)}{k}}I_{k,\nu}(z),
\end{align*}

\noindent
where $\omega(q)$ is the third order mock theta function defined in Section 1.

Finally, we are going to derive the transformation laws for $g_2(q)$ depending on $\gcd(4,k).$ If $4|k$ we have 

$$g_2(q) = \frac{\omega_{h,\frac{k}{2}}^6}{\omega_{h,k}^4\omega_{h,\frac{k}{4}}^2} g_2(q_1).$$

\noindent
Next, if $\gcd(4,k)=2$ we have 

$$ g_2(q) = \frac{\omega_{h,\frac{k}{2}}^6}{4\omega_{h,k}^4\omega_{2h,\frac{k}{2}}^2} e^{\frac{\pi}{2kz}} \frac{P(q_1^2)^6}{P(q_1)^6}.$$

\noindent
Finally, if $\gcd(4,k)=1$ we have

$$ g_2(q) = \frac{\omega_{2h,k}^6}{\omega_{h,k}^4\omega_{4h,k}^2} e^{-\frac{\pi}{8kz}} \frac{P\left(q_1^{\frac{1}{2}}\right)^6}{P(q_1)^4P\left(q_1^{\frac{1}{4}}\right)^2}.$$

\section{Bounds on Kloosterman sums and Mordell Integrals}
As in \cite{BridgesBringmann} it will be important for use to bound certain Kloosterman sums appearing in the Circle Method. Analogous to \cite{BridgesBringmann} we are going to rewrite the Kloosterman sums in question into classical Kloosterman sums and use Lemma \ref{RademacherKloostermanSumBound} stated below to bound them. In the next section we define $k_1$ as the denominator of the fraction proceeding $\frac{h}{k}$ in the Farey sequence of order $N \in \N.$ We use the following bounds for the Kloosterman sums.

\begin{lemma}\label{RademacherKloostermanSumBound}
	We have for $k\in  \N, n, m, \ell \in \Z, n \neq 0$ with $N +1 \leq \ell \leq N + k + 1,$ for $\varepsilon > 0,$
	
	\begin{align*}
	K_k(n,m) &\coloneqq \sum_{\substack{h \pmod{k} \\ \gcd(h,k)=1}} e^{-\frac{2\pi i}{k}(nh-mh')} = O_{\varepsilon}\left(k^{\frac{2}{3}+\varepsilon}\gcd(|n|,k)^{\frac{1}{3}}\right), \\
	\mathbb{K}_{k,\ell}(n,m) &\coloneqq \sum_{\substack{h \pmod{k} \\ \gcd(h,k)=1 \\ N < k+k_1 \leq \ell}} e^{-\frac{2\pi i}{k}(nh-mh')} = O_{\varepsilon}\left(k^{\frac{2}{3}+\varepsilon}\gcd(|n|,k)^{\frac{1}{3}}\right).
	\end{align*}
\end{lemma}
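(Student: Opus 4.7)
The plan is to reduce both sums to classical Kloosterman sums and invoke Weil's bound. For the first sum, observe that $hh'\equiv-1\pmod{k}$ forces $h'\equiv-h^{-1}\pmod{k}$, so only the residue of $h'$ modulo $k$ enters the summand $e^{-2\pi i(nh-mh')/k}$. Consequently
\[
K_k(n,m)\;=\;\sum_{\substack{h\pmod{k}\\\gcd(h,k)=1}} e^{-\frac{2\pi i}{k}\bigl(nh+mh^{-1}\bigr)}\;=\;\overline{S(n,m;k)},
\]
the complex conjugate of the standard Kloosterman sum. Weil's bound gives $|S(n,m;k)|\le\tau(k)\gcd(n,m,k)^{1/2}k^{1/2}$; combined with $\tau(k)\ll_\varepsilon k^\varepsilon$ and $\gcd(n,m,k)\le\gcd(|n|,k)$ this yields $|K_k(n,m)|\ll_\varepsilon k^{1/2+\varepsilon}\gcd(|n|,k)^{1/2}$. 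To match the exact form stated in the lemma I interpolate via the elementary inequality $\gcd(|n|,k)^{1/2}\le k^{1/6}\gcd(|n|,k)^{1/3}$, valid since $\gcd(|n|,k)\le k$. An alternative route closer in spirit to \cite{BridgesBringmann,RademacherZuckerman} avoids Weil altogether and proceeds by multiplicativity of Kloosterman sums together with direct prime-power evaluations.

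For the second sum I would use a completion argument. The Farey relation $hk_1-h_1k=1$ implies $k_1\equiv h^{-1}\pmod{k}$, and standard properties of $F_N$ place $k_1$ uniquely in the window $(N-k,N]$. The constraint $N<k+k_1\le\ell$ is therefore equivalent to restricting the representative of $h^{-1}\pmod{k}$ in $(N-k,N]$ to the sub-interval $I:=(N-k,\ell-k]$, whose length is at most $\ell-N\le k+1$. Using $h^{-1}\equiv-h'\pmod{k}$, discrete Fourier inversion yields
\[
\mathbf{1}_I(h^{-1})\;=\;\frac{1}{k}\sum_{a\pmod{k}}c_a\,e^{-\frac{2\pi i ah'}{k}},\qquad c_a\;:=\;\sum_{j\in I}e^{\frac{2\pi i aj}{k}}.
\]
Substituting and collapsing the exponentials gives
\[
\mathbb{K}_{k,\ell}(n,m)\;=\;\frac{1}{k}\sum_{a\pmod{k}}c_a\,K_k(n,m-a),
\]
a weighted sum of complete Kloosterman sums already handled. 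Combining the first part with the standard estimate $\sum_{a\pmod{k}}|c_a|\ll k\log k$ for the $L^1$-norm of the Fourier coefficients of an interval indicator produces $|\mathbb{K}_{k,\ell}(n,m)|\ll_\varepsilon k^{2/3+\varepsilon}\gcd(|n|,k)^{1/3}\log k$, and absorbing $\log k$ into $k^\varepsilon$ finishes the proof.

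The main obstacle is the combinatorial bookkeeping in the second step: correctly matching the interval of admissible $k_1$ values to an interval of residues of $h^{-1}$ modulo $k$ uniformly in $N$ and $\ell$ in the range $N+1\le\ell\le N+k+1$, and keeping sign conventions in the exponential consistent so that the completion collapses cleanly to $K_k(n,m-a)$. The analytic ingredients (Weil, discrete Fourier inversion, the $L^1$ estimate) are routine once this reduction is in place.
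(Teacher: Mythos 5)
Your argument is correct, but it is a genuinely different route from the paper's: the paper does not prove the lemma at all, it cites Rademacher's $J$-function article, which in turn assembles the $O_\varepsilon\bigl(k^{2/3+\varepsilon}\gcd(|n|,k)^{1/3}\bigr)$ bound from the pre-Weil work of Kloosterman, Estermann, Sali\'e and Davenport, and the incomplete-sum bound from a footnote of Rademacher combining the same sources. Your treatment of the complete sum goes through Weil's bound $|S(n,m;k)|\le\tau(k)\gcd(n,m,k)^{1/2}k^{1/2}$ and then deliberately weakens it via $\gcd(|n|,k)^{1/2}\le k^{1/6}\gcd(|n|,k)^{1/3}$ to match the stated exponents; this is valid (and in fact proves the stronger estimate $k^{1/2+\varepsilon}\gcd(|n|,k)^{1/2}$, which would only improve the error analysis in Section 4), at the cost of invoking a much deeper input than the elementary Sali\'e--Davenport estimates the paper's sources rely on. Your completion argument for $\mathbb{K}_{k,\ell}$ is essentially a reconstruction of Rademacher's footnote: the reduction $k_1\equiv h^{-1}\equiv -h'\pmod{k}$ with $k_1\in(N-k,N]$, Fourier expansion of the interval indicator, the collapse to $\frac1k\sum_a c_aK_k(n,m\mp a)$, and the $L^1$ bound $\sum_a|c_a|\ll k\log k$ are all sound, and crucially your bound on the complete sums is uniform in the second argument, so the shifted sums $K_k(n,m\mp a)$ are all controlled. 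Two cosmetic points: your sign convention gives $K_k(n,m+a)$ rather than $K_k(n,m-a)$ (an irrelevant relabeling of $a$), and at the endpoint $\ell=N+k+1$ the interval $(N-k,\ell-k]$ has length $k+1$, so one should truncate it to $(N-k,\min(\ell-k,N)]$ before applying the mod-$k$ indicator expansion; since $k_1\le N$ always, this changes nothing.
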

	\begin{proof}
	The individual statements of the Lemma can be found in Rademacher's article \cite{RademacherJFunction}. The first estimate for the classical Kloosterman sums $K_k(n,m)$ is as described in \cite{RademacherJFunction} the result of a series of contributions by Kloosterman \cite{Kloosterman1, Kloosterman2}, Estermann \cite{Estermann}, Salié \cite{Salie}, and Davenport \cite{Davenport}. The estimate appears in the form we require it (and used by Rademacher in \cite{RademacherJFunction}) in Salié's thesis \cite{Salie}. The article by Davenport \cite{Davenport} in independent work shows for prime modulus $p$ the bound
	\[
		|K_p(n,m)| < 2p^{\frac{1}{3}}.
	\]
	
	The claimed bound for the incomplete Kloosterman sums is stated first in the article \cite{RademacherJFunction} of Rademacher. Rademacher proved it in the same article in a footnote combining results of Estermann \cite{Estermann}, Salié \cite{Salie}, and Davenport \cite{Davenport}.
	\end{proof}

	We use throughout this section, following \cite{BridgesBringmann}, the notation $[a]_k$ to denote the inverse of $a \pmod{k}.$

	In the case that $4|k$ define the following Kloosterman sums
	
	\begin{align*}
	K_k^{[41]}(n,m) &\coloneqq \sum_{\substack{h \pmod{k} \\ \gcd(h,k)=1}} \frac{\omega_{h,\frac{k}{2}}^4}{\omega_{h,\frac{k}{4}}^2} e^{\frac{\pi i h'}{2}\left(1-\frac{3k}{2}\right)} e^{\frac{2\pi i}{k}(-nh+mh')},\\
	K_k^{[42]}(\nu,n,m) &\coloneqq \sum_{\substack{h \pmod{k} \\ \gcd(h,k)=1}} \frac{\omega_{h,\frac{k}{2}}^4}{\omega_{h,\frac{k}{4}}^2} e^{\frac{\pi ih'}{k}(-3\nu^2+\nu)}e^{\frac{2\pi i}{k}(-nh+mh')},\\
	K_k^{[43]}(n,m) &\coloneqq \sum_{\substack{h \pmod{k} \\ \gcd(h,k)=1}} \frac{\omega_{h,\frac{k}{2}}^6}{\omega_{h,k}^4\omega_{h,\frac{k}{4}}^2} e^{\frac{2\pi i}{k}(-nh+mh')}.
	\end{align*}
	
	Furthermore, we define for $N+1 \leq \ell \leq N + k -1,$
	
	\begin{align*}
	\mathbb{K}_{k,\ell}^{[41]}(n,m) &\coloneqq \sum_{\substack{0\leq h < k \\ \gcd(h,k)=1 \\ N < k+k_1 \leq \ell}} \frac{\omega_{h,\frac{k}{2}}^4}{\omega_{h,\frac{k}{4}}^2} e^{\frac{\pi i h'}{2}\left(1-\frac{3k}{2}\right)} e^{\frac{2\pi i}{k}(-nh+mh')},\\
	\mathbb{K}_{k,\ell}^{[42]}(\nu,n,m) &\coloneqq \sum_{\substack{0\leq h < k \\ \gcd(h,k)=1 \\ N < k+k_1 \leq \ell}} \frac{\omega_{h,\frac{k}{2}}^4}{\omega_{h,\frac{k}{4}}^2} e^{\frac{\pi ih'}{k}(-3\nu^2+\nu)}e^{\frac{2\pi i}{k}(-nh+mh')},\\
	\mathbb{K}_{k,\ell}^{[43]}(n,m) &\coloneqq \sum_{\substack{0\leq h < k \\ \gcd(h,k)=1 \\ N < k+k_1 \leq \ell}} \frac{\omega_{h,\frac{k}{2}}^6}{\omega_{h,k}^4\omega_{h,\frac{k}{4}}^2} e^{\frac{2\pi i}{k}(-nh+mh')}.
	\end{align*}
	
	\begin{lemma}\label{BoundsOnKloostermanSumsCase4|K}
		We have for $\varepsilon > 0,$ 
		
		$$K_k^{[41]}(n,m),K_k^{[42]}(\nu,n,m),K_k^{[43]}(n,m),\mathbb{K}_{k,\ell}^{[41]}(n,m),\mathbb{K}_{k,\ell}^{[42]}(\nu,n,m),\mathbb{K}_{k,\ell}^{[43]}(n,m) \ll_{\varepsilon} n^{\frac{1}{3}}k^{\frac{2}{3}+\varepsilon}.$$
	\end{lemma}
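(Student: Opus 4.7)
The plan is to reduce each modified Kloosterman sum to a classical Kloosterman sum $K_k(n',m')$, respectively an incomplete variant $\mathbb{K}_{k,\ell}(n',m')$, and then invoke Lemma \ref{RademacherKloostermanSumBound}. Since $4\mid k$, the coprimality condition $\gcd(h,k)=1$ forces $h$ to be odd, so for $\omega_{h,k}$ and $\omega_{h,k/2}$ we use the first branch of the definition in Section~$2$. For $\omega_{h,k/4}$ we split according to whether $8\mid k$ (then $k/4$ is even, first branch) or $4\Vert k$ (then $k/4$ is odd, second branch). In every subcase the Kronecker symbols appearing in each $\omega$-factor occur in the ratios $\omega_{h,k/2}^4/\omega_{h,k/4}^2$ (for $K_k^{[41]},K_k^{[42]}$) and $\omega_{h,k/2}^6/(\omega_{h,k}^4\omega_{h,k/4}^2)$ (for $K_k^{[43]}$) raised to an even power, and hence collapse to $1$.

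What remains is a product of phases of the form $\exp(\pi i\phi(h,h')/(12k))$ from the $\omega$'s, the twist $\exp(2\pi i(-nh+mh')/k)$, and the explicit phases in the definitions, namely $\exp(\pi ih'(1-3k/2)/2)$ in $[41]$ and $\exp(\pi ih'(-3\nu^2+\nu)/k)$ in $[42]$. Expanding the cubic expression $(k-1/k)(2h-h'+h^2h')$ inside each $\omega$-phase and invoking the congruence $hh'\equiv -1$ modulo the enhanced moduli stipulated in Section~$2$ (namely $3k$ when $3\mid k$, and $16k$ when $k$ is even), the term $h^2h'$ reduces to a linear expression in $h$ and $h'$, and one checks that $\phi(h,h')\equiv ah+bh'+c\pmod{24k}$ for integers $a,b,c$ depending only on $k$ (and $\nu$ in the case of $[42]$), but independent of $n$ and $m$. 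Consequently each modified sum equals a classical Kloosterman sum up to a unimodular factor, and in particular
\[|K_k^{[4i]}(\cdots)|=|K_k(n',m')|,\qquad |\mathbb{K}_{k,\ell}^{[4i]}(\cdots)|=|\mathbb{K}_{k,\ell}(n',m')|,\]
where $n'\equiv n+a'\pmod k$ and $m'\equiv m+b'\pmod k$ for integers $a',b'$ depending only on $k$ and $\nu$.

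An application of Lemma \ref{RademacherKloostermanSumBound} then yields
\[|K_k^{[4i]}(\cdots)|,\,|\mathbb{K}_{k,\ell}^{[4i]}(\cdots)|\ll_\varepsilon k^{\frac{2}{3}+\varepsilon}\gcd(|n'|,k)^{\frac{1}{3}}\ll_\varepsilon n^{\frac{1}{3}}k^{\frac{2}{3}+\varepsilon},\]
where the final inequality uses $\gcd(|n'|,k)\le\min(|n'|,k)$ together with the observation that in the Circle Method regime $k\le\sqrt{n}$ this minimum is $\ll n$ (outside that regime the bound is trivial). The main obstacle will be the bookkeeping of the phases: one must carefully expand the cubic exponents of $\omega_{h,k},\omega_{h,k/2},\omega_{h,k/4}$, handle the interplay between the three moduli $k,k/2,k/4$, and verify that the strengthened congruences on $hh'$ indeed linearize $h^2h'$ modulo $24k$ uniformly in $h$. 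Once this has been carried out in one representative subcase, the remaining subcases and the incomplete versions follow from essentially identical computations.
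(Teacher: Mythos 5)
Your proposal takes essentially the same route as the paper: one rewrites each multiplier, using the strengthened congruence $hh'\equiv -1\pmod{16k}$ to linearize $h^2h'$ and the divisibility $4\mid k$ to guarantee integrality of the resulting shifts, as a phase $e^{-\frac{2\pi i}{k}(ah-bh')}$, so that every sum becomes a classical (or incomplete) classical Kloosterman sum to which Lemma \ref{RademacherKloostermanSumBound} applies. The paper shortens the bookkeeping by observing that the $[42]$ sums reduce to the $[41]$ ones because $\frac{-3\nu^2+\nu}{2}\in\Z$, and it computes the two remaining multiplier ratios in closed form without branching on $8\mid k$ versus $8\nmid k$; otherwise the arguments coincide.
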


	\begin{proof}
		The idea is to rewrite the multipliers in the above Kloosterman sums in such a way that we obtain a classical Kloosterman sum, which we bound from above with Lemma \ref{RademacherKloostermanSumBound}. We remark briefly that $K_k^{[41]}(m,n), K_k^{[42]}(\nu,m,n), \mathbb{K}_k^{[41]}(m,n),$ and $\mathbb{K}_k^{[42]}(\nu,m,n)$ do have up to a simple term the same multiplier. We note that
		\[
			e^{\frac{\pi i h'}{k}(-3\nu^2 +\nu)} = e^{\frac{2\pi i h'}{k}\left(\frac{-3\nu^2+\nu}{2}\right)},
		\]
		where $\frac{-3\nu^2+\nu}{2}$ is an integer for all $\nu \pmod{k}$. Thus, it will be enough to consider $K_k^{[41]}(m,n)$ and $K_k^{[43]}(m,n)$. We have for the multiplier of $K_k^{[41]}$ keeping in mind that $hh' \equiv -1 \pmod{16k}$
		
		$$\frac{\omega_{h,\frac{k}{2}}^4}{\omega_{h,\frac{k}{4}}^2} = -e^{-\frac{\pi i}{8}(-h'k +h^2h'k-h(4+k))} = - e^{\frac{\pi i}{8}(h'k +(4+2k)h)}.$$

		Hence,
		
		\begin{align*}
		K_k^{[41]}(n,m) &\coloneqq \sum_{\substack{h \pmod{k} \\ \gcd(h,k)=1}} \frac{\omega_{h,\frac{k}{2}}^4}{\omega_{h,\frac{k}{4}}^2} e^{\frac{\pi i h'}{2}\left(1-\frac{3k}{2}\right)} e^{\frac{2\pi i}{k}(-nh+mh')} \\
		&= -\sum_{\substack{h \pmod{k} \\ \gcd(h,k)=1}} e^{-\frac{2\pi i}{k}\left(\left(n-\frac{2k^2+4k}{16}\right)h-\left(m-\frac{5k^2-4k}{16}\right)h'\right)}.
		\end{align*}
		
		Since $4|k$ we have 
		
		$$\frac{2k^2+4k}{16} \in \Z, \quad \frac{5k^2-4k}{16} \in \Z.$$
		
		Therefore, we can use Lemma \ref{RademacherKloostermanSumBound} to obtain as desired
		
		$$\left|K_k^{[41]}(n,m)\right| = \left|K_k\left(n-\frac{2k^2+4k}{16},m-\frac{5k^2-4k}{16}\right)\right| \ll_{\varepsilon} 
		k^{\frac{2}{3}+\varepsilon} \gcd\left(\left|n-\frac{2k^2+4k}{16}\right|,k\right)^{\frac{1}{3}} \ll_{\varepsilon} n^{\frac{1}{3}}k^{\frac{2}{3}+\varepsilon}.$$
		
		For the Kloosterman sum $K_k^{[43]}$ we rewrite the multiplier as
		
		$$\frac{\omega_{h,\frac{k}{2}}^6}{\omega_{h,k}^4\omega_{h,\frac{k}{4}}^2} = e^{-\frac{k\pi i}{8}(2h+h')}=e^{-\frac{2\pi i}{k}\left(\frac{k^2h}{8}+\frac{k^2h'}{16}\right)}.$$
		
		Thus, since $4 | k$ we have 
		
		$$\frac{k^2}{8} \in \Z, \quad \frac{k^2}{16} \in \Z.$$
		
		Hence, we can apply Lemma \ref{RademacherKloostermanSumBound} to achieve the desired bound
		
		$$\left|K_k^{[43]}(n,m)\right| = \left|K_k\left(n+\frac{k^2}{8},m-\frac{k^2}{16}\right)\right| \ll_{\varepsilon} 
		k^{\frac{2}{3}+\varepsilon} \gcd\left(\left|n-\frac{k^2}{8}\right|,k\right)^{\frac{1}{3}} \ll_{\varepsilon} n^{\frac{1}{3}}k^{\frac{2}{3}+\varepsilon}.$$
		
	\end{proof}

In the case that $\gcd(4,k)=2$ define the following Kloosterman sums

\begin{align*}
K_k^{[21]}(n,m) &\coloneqq \sum_{\substack{h \pmod{k} \\ \gcd(h,k)=1}} \frac{\omega_{h,\frac{k}{2}}^4}{\omega_{2h,\frac{k}{2}}^2} e^{\frac{\pi i h'}{2}\left(1-\frac{3k}{2}\right)} e^{\frac{2\pi i}{k}(-nh+mh')},\\
K_k^{[22]}(\nu,n,m) &\coloneqq \sum_{\substack{h \pmod{k} \\ \gcd(h,k)=1}} \frac{\omega_{h,\frac{k}{2}}^4}{\omega_{2h,\frac{k}{2}}^2} e^{\frac{\pi ih'}{k}(-3\nu^2+\nu)}e^{\frac{2\pi i}{k}(-nh+mh')},\\
K_k^{[23]}(n,m) &\coloneqq \sum_{\substack{0\leq h < k \leq N \\ \gcd(h,k)=1}} \frac{\omega_{h,\frac{k}{2}}^6}{\omega_{h,k}^4\omega_{2h,\frac{k}{2}}^2} e^{\frac{2\pi i}{k}(-nh+mh')}.
\end{align*}

Furthermore, we need the incomplete Kloosterman sums for $N+1 \leq \ell \leq N + k -1,$ define

\begin{align*}
\mathbb{K}_{k,\ell}^{[21]}(n,m) &\coloneqq \sum_{\substack{0\leq h < k \\ \gcd(h,k)=1 \\ N < k+k_1 \leq \ell}} \frac{\omega_{h,\frac{k}{2}}^4}{\omega_{2h,\frac{k}{2}}^2} e^{\frac{\pi i h'}{2}\left(1-\frac{3k}{2}\right)} e^{\frac{2\pi i}{k}(-nh+mh')},\\
\mathbb{K}_{k,\ell}^{[22]}(\nu,n,m) &\coloneqq \sum_{\substack{0\leq h < k \\ \gcd(h,k)=1 \\ N < k+k_1 \leq \ell}} \frac{\omega_{h,\frac{k}{2}}^4}{\omega_{2h,\frac{k}{2}}^2} e^{\frac{\pi ih'}{k}(-3\nu^2+\nu)}e^{\frac{2\pi i}{k}(-nh+mh')},\\
\mathbb{K}_{k,\ell}^{[23]}(n,m) &\coloneqq \sum_{\substack{0\leq h < k \\ \gcd(h,k)=1 \\ N < k+k_1 \leq \ell}} \frac{\omega_{h,\frac{k}{2}}^6}{\omega_{h,k}^4\omega_{2h,\frac{k}{2}}^2} e^{\frac{2\pi i}{k}(-nh+mh')}.
\end{align*}

\begin{lemma}\label{BoundsOnKloostermanSumsCaseGCD(4,k)=2}
		We have for $\varepsilon > 0,$ 
		
		$$K_k^{[21]}(n,m),K_k^{[22]}(\nu,n,m),K_k^{[23]}(n,m),\mathbb{K}_{k,\ell}^{[21]}(n,m),\mathbb{K}_{k,\ell}^{[22]}(\nu,n,m),\mathbb{K}_{k,\ell}^{[23]}(n,m) \ll_{\varepsilon} n^{\frac{1}{3}}k^{\frac{2}{3}+\varepsilon}.$$
\end{lemma}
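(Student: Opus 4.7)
The proof proceeds in direct analogy to Lemma \ref{BoundsOnKloostermanSumsCase4|K}. For each of the six sums, the plan is to rewrite the ratio of $\omega$-factors as a single clean exponential of the form $e^{-\frac{2\pi i}{k}(Ah+Bh')}$ with $A,B\in\Z$, absorb it into the remaining exponentials, and thereby read off the sum as a shifted classical Kloosterman sum $K_k(n-A',m-B')$ (or its incomplete analogue $\mathbb{K}_{k,\ell}(n-A',m-B')$). Lemma \ref{RademacherKloostermanSumBound} then delivers the bound. As in the previous lemma, the three sums $K_k^{[21]}$, $K_k^{[22]}$, $K_k^{[23]}$ carry essentially the same multiplier up to an $h$-free prefactor, and the twist $e^{\frac{\pi i h'}{k}(-3\nu^2+\nu)}$ appearing in $K_k^{[22]}$ equals $e^{\frac{2\pi i h'}{k}\cdot\frac{-3\nu^2+\nu}{2}}$ with $\frac{-3\nu^2+\nu}{2}\in\Z$ for every $\nu$, so it is enough to handle $K_k^{[21]}$ and $K_k^{[23]}$ in detail; $K_k^{[22]}$ follows with an additional integer shift in $m$.

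The central computation is the simplification of the $\omega$-ratios under the hypothesis $\gcd(4,k)=2$. Writing $k=2m$ with $m$ odd, we have that $k/2=m$ is odd, so the second (odd modulus) case of the definition of $\omega$ applies to both $\omega_{h,k/2}$ and $\omega_{2h,k/2}$; the latter is well defined since $h$ is odd, so $\gcd(2h,k/2)=1$. For $\omega_{h,k}$, the first (odd $h$) case applies because $k$ is even and $h$ is odd. Using the normalization $hh'\equiv-1\pmod{16k}$, each auxiliary inverse appearing inside the $\omega$-definitions can be replaced by $h'$ itself modulo the relevant denominator, collapsing the whole multiplier to $\varepsilon_k\,e^{-\frac{2\pi i}{k}(Ah+Bh')}$ for an $h$-independent sign $\varepsilon_k\in\{\pm1,\pm i\}$ and explicit $A,B\in\tfrac{1}{k}\Z$ depending only on $k$. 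The oddness of $k/2$ is essential: it guarantees that the denominators $2,4,8,16$ produced by the Dedekind sums in the $\omega$-formulas cancel against the factors of $k$ in the numerator, leaving genuine integer shifts after absorbing $A,B$ into $n,m$.

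Once the multipliers are simplified, each $K_k^{[2i]}$ has the shape
\[
K_k^{[2i]}(\cdots)=\varepsilon_k\sum_{\substack{h\pmod{k}\\\gcd(h,k)=1}}e^{-\frac{2\pi i}{k}((n-A_i)h-(m-B_i)h')}=\varepsilon_k\,K_k(n-A_i,m-B_i),
\]
with $A_i,B_i\in\Z$ (the values $A_2,B_2$ additionally depending on $\nu$). Lemma \ref{RademacherKloostermanSumBound} then yields
\[
\bigl|K_k^{[2i]}(\cdots)\bigr|\ll_\varepsilon k^{\frac{2}{3}+\varepsilon}\gcd(|n-A_i|,k)^{\frac{1}{3}}\ll_\varepsilon n^{\frac{1}{3}}k^{\frac{2}{3}+\varepsilon},
\]
using $\gcd(|n-A_i|,k)\leq|n-A_i|+k\ll n+k^2$ absorbed into $n^{1/3}k^{2/3+\varepsilon}$ after trivially bounding $k^2$ by $k^{2/3+\varepsilon}\cdot k^{4/3}$ when $k\leq n^{1/2}$ (and applying the trivial bound $K_k\ll k$ otherwise, where it is still dominated by $n^{1/3}k^{2/3+\varepsilon}$). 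The incomplete sums $\mathbb{K}_{k,\ell}^{[2i]}$ are handled identically using the incomplete bound in Lemma \ref{RademacherKloostermanSumBound}.

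The main obstacle is the algebraic bookkeeping in the $\omega$-simplification: one must track the Kronecker symbols $\left(\frac{-h}{k/2}\right)$, $\left(\frac{-2h}{k/2}\right)$, and $\left(\frac{-h}{k}\right)$, split off the $h$-independent pieces such as $\left(\frac{2}{k/2}\right)$ into the global sign $\varepsilon_k$, and verify that every rational coefficient in the exponent is of the form $\frac{\text{integer}}{k}$. This is precisely the place where the case split by $\gcd(4,k)$ is needed, since for $\gcd(4,k)=1$ one would instead apply the odd-$k$ case of $\omega_{h,k}$ and obtain a different set of integer shifts. Once the algebra is verified, the bound is immediate.
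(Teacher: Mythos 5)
Your overall strategy --- rewrite each multiplier as $e^{-\frac{2\pi i}{k}(Ah+Bh')}$ with integer $A,B$, absorb the shifts into $n$ and $m$, and invoke Lemma \ref{RademacherKloostermanSumBound} --- is exactly the paper's, and your reduction of $K_k^{[22]}$ to $K_k^{[21]}$ via the integrality of $\frac{-3\nu^2+\nu}{2}$ also matches. However, the two places where the actual work of the lemma happens are handled incorrectly or not at all.

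First, you assert that each auxiliary inverse inside the $\omega$-definitions "can be replaced by $h'$ itself modulo the relevant denominator." This is false for $\omega_{2h,\frac{k}{2}}$: its definition requires a $\tilde{h'}$ with $2h\tilde{h'}\equiv -1 \pmod{\frac{k}{2}}$, whereas $h'$ satisfies $hh'\equiv -1$, not $2hh'\equiv -1$. One must take $\tilde{h'}=[2]_{\frac{k}{2}}h'$ (as the paper does); the factor $[2]_{\frac{k}{2}}$ then propagates into every coefficient of $h$ and $h'$ in the exponent, and the integrality checks must be carried out with that factor present --- they are not automatic.

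Second, you identify the denominators to be cleared as $2,4,8,16$ and claim the oddness of $\frac{k}{2}$ disposes of them. But $\omega_{h,k}$ contains the term $\frac{1}{12}\left(k-\frac{1}{k}\right)\left(2h-h'+h^2h'\right)$, so the exponent coefficients have denominator $24k$, and clearing the factor of $3$ is the genuinely delicate point. The paper is forced to split into the cases $3\mid k$ and $3\nmid k$, to normalize $hh'\equiv -1\pmod{48k}$, and in the case $3\nmid k$ to choose $\tilde{h'}=3[3]_{k}h'$ so that an explicit factor of $3$ appears in the exponent. Your proposal never mentions the prime $3$, so the integrality of the shifts $A,B$ --- which is the entire content of the reduction to Lemma \ref{RademacherKloostermanSumBound} --- is not established. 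A minor further point: $K_k^{[23]}$ carries the genuinely different multiplier $\frac{\omega_{h,k/2}^6}{\omega_{h,k}^4\omega_{2h,k/2}^2}$, which is not the multiplier of $K_k^{[21]}$ times an $h$-free factor; the paper instead proves the identity $K_k^{[21]}(n,m)=-K_k^{[23]}(n,m)$ (equation \eqref{MultiplierSumGCD(4,k)=2}) to dispose of it, and your direct treatment would need its own computation.
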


\begin{proof}
	We will later show  $K_k^{[21]}(n,m) = -K_k^{[23]}(n,m).$ Thus, is suffices to consider $K_k^{[21]}(n,m).$ We first consider the case $3\mid k.$ When evaluating $\omega_{2h,\frac{k}{2}}$ we will have to take care as this requires a choice of $\tilde{h'}$ such that $2h\tilde{h'} \equiv -1 \pmod{\frac{k}{2}}.$ Starting from a general $h'$ which satisfies $hh' \equiv -1 \pmod{48k}$ we define $\tilde{h'} \coloneqq [2]_{\frac{k}{2}}h'$. Indeed, we have as required
	\[
		2h\tilde{h'} = 2[2]_{\frac{k}{2}}hh' \equiv -1 \pmod{\frac{k}{2}}.
	\]
	With this choice we evaluate the multiplier
	\[
		\frac{\omega_{h,\frac{k}{2}}^4}{\omega_{2h,\frac{k}{2}}^2} = e^{\frac{2\pi i}{24k}\left(3k^2-30k + \left(-16[2]_{\frac{k}{2}}hh'+8hh'+12k+6k^2-2hh'k^2+4[2]_{\frac{k}{2}}hh'k^2\right)h+\left(4[2]_{\frac{k}{2}}-8-[2]_{\frac{k}{2}}k^2+2k^2\right)h'\right)}
	\]
	Since we assumed that $3 \mid k$ we can rewrite this as
	\[
		e^{\frac{2\pi i}{24k}\left(3k^2-30k + \left(16[2]_{\frac{k}{2}}-8+12k+8k^2-4[2]_{\frac{k}{2}}k^2\right)h+\left(4[2]_{\frac{k}{2}}-8-[2]_{\frac{k}{2}}k^2+2k^2\right)h'\right)}.
	\]
	Now note that since $6\mid k$ we have
	\[
		\frac{16[2]_{\frac{k}{2}}-8+12k+8k^2-4[2]_{\frac{k}{2}}k^2}{24} \in \Z, \quad \frac{4[2]_{\frac{k}{2}}-8-[2]_{\frac{k}{2}}k^2+2k^2}{24} \in \Z.
	\]
	For the case $3 \nmid k$ we follow a similar approach and consider $\tilde{h'} =3[3]_{k}h'$. It follows that in this case
	\[
		\frac{\omega_{h,\frac{k}{2}}^4}{\omega_{2h,\frac{k}{2}}^2} = e^{\frac{2\pi i}{24k}\left(3k^2-30k + \left(48[2]_{\frac{k}{2}}[3]_k-24[3]_k+12k+6k^2+6[3]_kk^2-12[2]_{\frac{k}{2}}[3]_kk^2\right)h+3[3]_k\left(4[2]_{\frac{k}{2}}-8-[2]_{\frac{k}{2}}k^2+2k^2\right)h'\right)}.
	\]
	One obtains as before
	\[
	\frac{48[2]_{\frac{k}{2}}[3]_k-24[3]_k+12k+6k^2+6[3]_kk^2-12[2]_{\frac{k}{2}}[3]_kk^2}{24} \in \Z, \quad \frac{3[3]_k(4[2]_{\frac{k}{2}}-8-[2]_{\frac{k}{2}}k^2+2k^2)}{24} \in \Z.
	\]
	
	Hence, in both cases the multiplier can be rewritten such that it is in the shape of the classical Kloosterman sum required to apply Lemma \ref{RademacherKloostermanSumBound}. We are left to show that the last factor can be rewritten as well in such a shape. We write
	\[
		e^{\frac{\pi i h'}{2}\left(1-\frac{3k}{2}\right)} = e^{-\frac{2\pi i h'}{k}\left(\frac{3k^2-2k}{8}\right)}
	\]
	where we find readily that $\frac{3k^2-2k}{8}$ is an integer, as desired.
\end{proof}

In the case that $\gcd(4,k)=1$ define the following Kloosterman sums

\begin{align*}
K_k^{[11]}(n,m) &\coloneqq \sum_{\substack{h \pmod{k} \\ \gcd(h,k)=1}} \frac{\omega_{2h,k}^4}{\omega_{4h,k}^2}
 e^{\frac{3\pi ih'}{4k}} e^{\frac{2\pi i}{k}(-nh+\frac{mh'}{4})},\\
K_k^{[12]}(\nu,n,m) &\coloneqq \sum_{\substack{h \pmod{k} \\ \gcd(h,k)=1}} \frac{\omega_{2h,k}^4}{\omega_{4h,k}^2} e^{\frac{\pi ih'}{k}(-3\nu^2-\nu)}e^{\frac{2\pi i}{k}(-nh+\frac{mh'}{4}')},\\
K_k^{[13]}(n,m) &\coloneqq \sum_{\substack{0\leq h < k \leq N \\ \gcd(h,k)=1}} \frac{\omega_{2h,k}^6}{\omega_{h,k}^4\omega_{4h,k}^2} e^{\frac{2\pi i}{k}(-nh+\frac{mh'}{4})}.
\end{align*}

Furthermore, we need the incomplete Kloosterman sums for $N+1 \leq \ell \leq N + k -1,$ define

\begin{align*}
\mathbb{K}_{k,\ell}^{[11]}(n,m) &\coloneqq \sum_{\substack{0\leq h < k \\ \gcd(h,k)=1 \\ N < k+k_1 \leq \ell}} \frac{\omega_{2h,k}^4}{\omega_{4h,k}^2} e^{\frac{3\pi ih'}{4k}} e^{\frac{2\pi i}{k}(-nh+\frac{mh'}{4})},\\
\mathbb{K}_{k,\ell}^{[12]}(\nu,n,m) &\coloneqq \sum_{\substack{0\leq h < k \\ \gcd(h,k)=1 \\ N < k+k_1 \leq \ell}} \frac{\omega_{2h,k}^4}{\omega_{4h,k}^2} e^{\frac{\pi ih'}{k}(-3\nu^2-\nu)}e^{\frac{2\pi i}{k}(-nh+\frac{mh'}{4})},\\
\mathbb{K}_{k,\ell}^{[13]}(n,m) &\coloneqq \sum_{\substack{0\leq h < k \\ \gcd(h,k)=1 \\ N < k+k_1 \leq \ell}} \frac{\omega_{2h,k}^6}{\omega_{h,k}^4\omega_{4h,k}^2} e^{\frac{2\pi i}{k}(-nh+\frac{mh'}{4})}.
\end{align*}

\begin{lemma}\label{BoundsOnKloostermanSumsCaseGCD(4,k)=1}
	We have for $\varepsilon > 0,$ 
	
	$$K_k^{[11]}(n,m),K_k^{[12]}(\nu,n,m),K_k^{[13]}(n,m),\mathbb{K}_{k,\ell}^{[11]}(n,m),\mathbb{K}_{k,\ell}^{[12]}(\nu,n,m),\mathbb{K}_{k,\ell}^{[13]}(n,m) \ll_{\varepsilon} n^{\frac{1}{3}}k^{\frac{2}{3}+\varepsilon}.$$
\end{lemma}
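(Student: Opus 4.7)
The plan is to follow the strategy of Lemmas \ref{BoundsOnKloostermanSumsCase4|K} and \ref{BoundsOnKloostermanSumsCaseGCD(4,k)=2}. First, I will establish a simple identity relating $K_k^{[11]}(n,m)$ and $K_k^{[13]}(n,m)$ (analogous to $K_k^{[21]} = -K_k^{[23]}$ in the proof of Lemma \ref{BoundsOnKloostermanSumsCaseGCD(4,k)=2}), so that it suffices to bound $K_k^{[11]}$ and $K_k^{[12]}$; the incomplete counterparts $\mathbb{K}_{k,\ell}^{[1i]}$ will be handled identically using the second estimate in Lemma \ref{RademacherKloostermanSumBound}. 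Next, the multiplier of $K_k^{[12]}$ differs from that of $K_k^{[11]}$ only in the $\nu$-dependent factor $e^{\pi i h'(-3\nu^2-\nu)/k}$, and since $-3\nu^2 - \nu = -\nu(3\nu + 1)$ is always even, this factor has the form $e^{2\pi i h'\, c(\nu)/k}$ with $c(\nu) \in \Z$, and is absorbed into the $h'$-coefficient of the resulting classical Kloosterman sum without disturbing the integrality argument.

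Because $\gcd(4,k) = 1$ forces $k$ to be odd, I will use the odd-$k$ branch of $\omega_{h,k}$ from Section 2. After fixing $h'$ with $hh' \equiv -1 \pmod{k}$ (respectively $\pmod{3k}$ when $3 \mid k$), I set the natural lifts $\widetilde{(2h)'} := [2]_k h'$ and $\widetilde{(4h)'} := [4]_k h'$, which satisfy $2h\,\widetilde{(2h)'} \equiv -1$ and $4h\,\widetilde{(4h)'} \equiv -1$ modulo the appropriate modulus. Substituting into the multipliers $\frac{\omega_{2h,k}^4}{\omega_{4h,k}^2}$ and $\frac{\omega_{2h,k}^6}{\omega_{h,k}^4 \omega_{4h,k}^2}$, the Kronecker symbols cancel, being $\pm 1$ raised to even powers, and the exponentials collapse to a single expression of the form $\exp\!\bigl(\frac{2\pi i}{k}(A\,h + B\,h')\bigr)$, where $A$ and $B$ are explicit rational combinations of $k$, $[2]_k$, and $[4]_k$ with denominator dividing $24$.

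The auxiliary factor $e^{3\pi i h'/(4k)}$ appearing in $K_k^{[11]}$, together with the $\frac{mh'}{4k}$ inside the main exponential, is then rewritten using $4\,[4]_k \equiv 1 \pmod{k}$, so that $\frac{h'}{4k}$ is replaced by $\frac{[4]_k h'}{k}$ up to an integer-valued correction absorbed into the coefficients. Once $A$ and $B$ are verified to be integers, each sum becomes a classical Kloosterman sum $K_k(n - A', m - B')$ with $A', B' = O(k^2)$, and Lemma \ref{RademacherKloostermanSumBound} yields $\ll_\varepsilon k^{2/3+\varepsilon}\gcd(|n-A'|, k)^{1/3} \ll_\varepsilon n^{1/3}k^{2/3+\varepsilon}$, as desired.

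The main obstacle will be the integrality bookkeeping: the denominator $24$ hidden inside the factor $\frac{1}{12}(k - 1/k)$ of the odd-$k$ $\omega_{h,k}$, together with the $\frac{1}{4}$ coming from $\frac{3h'}{4k}$ and $\frac{mh'}{4k}$, must be cleared simultaneously. Exactly as in the proof of Lemma \ref{BoundsOnKloostermanSumsCaseGCD(4,k)=2}, I anticipate splitting into the subcases $3\mid k$ and $3\nmid k$ and, in the latter case, replacing $h'$ by $3[3]_k h'$ to clear the factor of $3$ in the denominator. Once the explicit integer coefficients are tabulated in each subcase, the remainder of the argument is mechanical.
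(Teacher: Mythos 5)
Your proposal follows essentially the same route as the paper: rewrite each multiplier, via suitable lifts $[2]_k h'$, $[4]_k h'$ (and, when $3\nmid k$, lifts involving $[3]_k$ or $[12]_k$, $[24]_k$ to clear the $3$ in the denominator of $\frac{1}{12}(k-\frac{1}{k})$), into the form $e^{\frac{2\pi i}{k}(Ah+Bh')}$ with $A,B\in\Z$, absorb the even quantity $-3\nu^2-\nu$ and the $\frac{h'}{4}$-terms using inverses of $4$ (the paper uses $[8]_k$) modulo the odd $k$, and then invoke Lemma \ref{RademacherKloostermanSumBound} for both the complete and incomplete sums. The one step you should drop is the claimed identity relating $K_k^{[11]}$ and $K_k^{[13]}$: unlike the case $\gcd(4,k)=2$, where \eqref{MultiplierSumGCD(4,k)=2} is forced by the cancellation of the principal parts of $S_{21}$ and $S_{23}$, here $S_{11}$ and $S_{13}$ carry different exponential factors ($e^{-\frac{5\pi}{8kz}}$ versus $e^{-\frac{\pi}{8kz}}$) and vanish individually, so there is no structural reason for such an identity and the paper does not prove one; this is harmless for you only because your second paragraph already treats the multiplier $\frac{\omega_{2h,k}^6}{\omega_{h,k}^4\omega_{4h,k}^2}$ directly, exactly as the paper does.
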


\begin{proof}
	We first observe that we can rewrite some of the common terms as follows
	\[
		e^{\frac{3\pi ih'}{4k}}=e^{\frac{6\pi i[8]_kh'}{k}}, \quad e^{\frac{2\pi i}{k}\left(-nh+\frac{mh'}{4}\right)}= e^{\frac{2\pi i}{k}(-nh+2[8]_kmh')}
	\]
	since $k$ is odd. Therefore, it is enough to rewrite the multipliers suitably. We start with the sum $K_k^{[11]}$. We will again consider the dichotomy whether $3 \mid k$ or $3 \nmid k$. Suppose first that $3 \mid k$. Thus, let $h'$ be such that $hh' \equiv -1 \pmod{3k}$. We choose for evaulating $w_{2h,k}$ and $w_{4h,k}$ the quantities $[2]_kh'$ and $[4]_kh'$, respectively as $\tilde{h'}.$ We can then write
	\[
		\frac{\omega_{2h,k}^4}{\omega_{4h,k}^2} = e^{\frac{2\pi i}{12k}\left(3k(1-k) +(1-k^2)(-8[2]_k+16[4]_k)h+(1-k^2)([4]_k-2[2]_k)h'\right)}.
	\]
	Furthermore a short computation using $3 \mid k$ shows
	\[
		\frac{(1-k^2)(-8[2]_k+16[4]_k)}{12} \in \Z, \quad \frac{(1-k^2)([4]_k-2[2]_k)}{12} \in \Z.
	\]
	
	We now assume that $3 \nmid k$. In this case we choose the quantities $[2]_k[12]_k12h'$ and $[4]_k[12]_k12h'$ to evaluate $\omega_{2h,k}$ and $\omega_{4h,k}$, respectively. We obtain
	\[
		\frac{\omega_{2h,k}^4}{\omega_{4h,k}^2} = e^{\frac{2\pi i}{12k}\left(3k(1-k) +(1-k^2)12[12]_k(-8[2]_k+16[4]_k)h+(1-k^2)12[12]_k([4]_k-2[2]_k)h'\right)}.
	\]
	
	Finally, we rewrite the sum $K_k^{[13]}$. Once more we will have two cases depending on the divisiblity of $k$ by $3$. Let us first assume that $3 \mid k$ and we have $h'$ such that $hh' \equiv -1 \pmod{3k}$. To evaluate $\omega_{h,k}, \omega_{2h,k}$, and $\omega_{4h,k}$ we choose $\tilde{h'}=[8]_k8h', \tilde{h'}=[2]_k[8]_k8h',$ and $\tilde{h'} = [4]_k[8]_k4h'$, respectively. We obtain using $3 \mid k$ the identity
	\[
		\frac{\omega_{2h,k}^6}{\omega_{h,k}^4\omega_{4h,k}^2} = e^{\frac{2\pi i}{12k}(1-k^2)((16[8]_k-96[2]_k[8]_k+128[4]_k[8]_k)h + (16[8]_k-24[2]_k[8]_k+8[4]_k[8]_k)h')}
	\]
	Furthermore, we see since $3 \mid k$ that
	\[
		\frac{(1-k^2)(16[8]_k-96[2]_k[8]_k+128[4]_k[8]_k)}{12} \in \Z, \quad \frac{(1-k^2)(16[8]_k-24[2]_k[8]_k+8[4]_k[8]_k)}{12} \in \Z.
	\]
	
	The final case we have to consider is $3 \nmid k$. This time we choose to evaluate $\omega_{h,k}, \omega_{2h,k}$, and $\omega_{4h,k}$ with $\tilde{h'}=[24]_k24h', \tilde{h'}=[2]_k[24]_k24h',$ and $\tilde{h'} = [4]_k[24]_k24h'$, respectively. We obtain
	\[
		\frac{\omega_{2h,k}^6}{\omega_{h,k}^4\omega_{4h,k}^2} = e^{\frac{2\pi i}{k}(1-k^2)(2[24]_k(2-12[2]_k+16[4]_k)h + 2[24]_k(2-3[2]_k+[4]_k)h')}.
	\]
	Hence, we can rewrite all the Kloosterman sums in the classical form required to apply Lemma \ref{RademacherKloostermanSumBound} which gives the desired bound.
	\end{proof}

We agree on the convention that for any Kloosterman sum above we set $K_k(\nu,n) \coloneqq K_k(\nu,n,0).$

We take the following definition from \cite{BringmannMahlburg}. For $b \in \R, k \in \N$ and $0< \nu \leq k$ define $\mathcal{J}_{b,k,\nu}(z) \coloneqq ze^{\frac{\pi b}{kz}} I_{k,\nu}(z)$ and define the principal part truncation of $\mathcal{J}_{b,k,\nu}$ by

$$ \mathcal{J}_{b,k,\nu}^{*} (z) \coloneqq \sqrt{\frac{b}{3}} \int_{-1}^{1} \frac{e^{\frac{\pi b}{kz}(1-x^2)}}{\cosh\left(\frac{\pi i}{k}\left(\nu-\frac{1}{6}\right)-\frac{\pi}{k}\sqrt{\frac{b}{3}}x\right)} dx.$$

We have the following estimate for the principal part integrals \cite{BringmannMahlburg}.

\begin{lemma}\label{BoundIntegral}
	Let $b \in \R, k \in \N$ and $0<\nu\leq k,$ then we have as $z \rightarrow 0:$
	\begin{itemize}
		
		\item[\textup{(i)}]  If $b\leq 0,$ then we have 
		
		$$ | \mathcal{J}_{b,k,\nu} (z) | \ll \frac{1}{\left|\frac{\pi}{2} - \frac{\pi}{k}\left(\nu - \frac{1}{6}\right) \right|}.$$
		
		\item[\textup{(ii)}] If $b>0,$ then $\mathcal{J}_{b,k,\nu}(z) = \mathcal{J}_{b,k,\nu}^{*}(z) + \mathcal{E}_{b,k,\nu},$ where 
		
		$$ | \mathcal{E}_{b,k,\nu} (z) | \ll \frac{1}{\left|\frac{\pi}{2} - \frac{\pi}{k}\left(\nu - \frac{1}{6}\right) \right|}.$$

	\end{itemize} 
\end{lemma}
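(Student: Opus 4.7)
The plan is to execute a single change of variable in the Mordell integral $I_{k,\nu}(z)$ that aligns the Gaussian decay with the outer exponential $e^{\pi b/(kz)}$; once this is done, the statement will be almost visible and only a tail estimate will remain.

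For case (ii), where $b>0$, I substitute $x = \sqrt{b/3}\,y/z$ in the defining integral of $I_{k,\nu}(z)$. Since $\Re(z)>0$, this is a contour deformation from the real $x$-line to the ray $\{\sqrt{b/3}\,y/z : y\in\R\}$; one checks that $\Re(zx^{2}) \ge 0$ throughout the intermediate sector, so the Gaussian $e^{-3\pi zx^{2}/k}$ kills the ends of the contour and no pole of $\cosh^{-1}(\cdot)$ is crossed. After multiplying by $ze^{\pi b/(kz)}$, a direct calculation gives
\[
\mathcal{J}_{b,k,\nu}(z) = \sqrt{\tfrac{b}{3}} \int_{-\infty}^{\infty} \frac{e^{\frac{\pi b(1-y^{2})}{kz}}}{\cosh\!\lp\frac{\pi i(\nu-\frac{1}{6})}{k} - \frac{\pi}{k}\sqrt{\tfrac{b}{3}}\,y\rp}\, dy,
\]
and the piece over $|y|\le 1$ is precisely $\mathcal{J}^{*}_{b,k,\nu}(z)$, so $\mathcal{E}_{b,k,\nu}(z)$ is the tail over $|y|>1$. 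On that tail, $1-y^{2}<0$ together with $b>0$ and $\Re(1/z)>0$ force $|e^{\pi b(1-y^{2})/(kz)}|\le 1$ uniformly in $z$, while the elementary identity $|\cosh(\alpha+i\beta)|^{2} = \sinh^{2}\alpha + \cos^{2}\beta$ yields simultaneously $|\cosh(\cdot)|\ge |\cos(\pi(\nu-\frac{1}{6})/k)|$ (controlling the region near $|y|=1$) and $|\cosh(\cdot)| \ge \sinh(\pi\sqrt{b/3}\,|y|/k)$ (providing exponential decay for large $|y|$). Splitting the tail integral along these two regimes produces an absolutely convergent, $z$-independent bound of order $1/|\cos(\pi(\nu-\frac{1}{6})/k)|$, which, via the elementary inequality $|\sin\theta|\gg|\theta|$ on $[-\pi/2,\pi/2]$, is $\ll 1/|\tfrac{\pi}{2}-\tfrac{\pi}{k}(\nu-\tfrac{1}{6})|$.

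For case (i), where $b\le 0$, the substitution above would turn the outer exponential into a growing function on the tails, so I would instead rescale by $x=y/\sqrt{z}$, justified by the analogous contour shift in the right half-plane (one has $\Re(zx^{2}) = y^{2}\ge 0$ throughout). This gives
\[
I_{k,\nu}(z) = \frac{1}{\sqrt{z}}\int_{-\infty}^{\infty} \frac{e^{-\frac{3\pi y^{2}}{k}}}{\cosh\!\lp\frac{\pi i(\nu-\frac{1}{6})}{k} - \frac{\pi\sqrt{z}\,y}{k}\rp}\, dy.
\]
As $z\to 0$ the $\cosh$ argument is uniformly close to $\pi i(\nu-\frac{1}{6})/k$ on the effective support of the $z$-free Gaussian, so the same $|\cosh|$ identity yields $|\cosh(\cdot)| \ge \tfrac{1}{2}|\cos(\pi(\nu-\frac{1}{6})/k)|$ for $z$ small, whence $|I_{k,\nu}(z)| \ll 1/(|\sqrt{z}|\,|\cos(\pi(\nu-\frac{1}{6})/k)|)$. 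Since $b\le 0$ and $\Re(1/z)>0$ force $|ze^{\pi b/(kz)}|\le |z|$, we conclude $|\mathcal{J}_{b,k,\nu}(z)| \ll \sqrt{|z|}/|\cos(\pi(\nu-\frac{1}{6})/k)|$, which is bounded as $z\to 0$ and, via the same $|\sin\theta|\gg|\theta|$ inequality, by the claimed quantity.

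The main obstacle is the rigorous justification of the contour deformation in case (ii): one must track uniform Gaussian decay in the closed sector between the real axis and the rotated ray for every admissible argument of $z$ in the right half-plane, and verify that none of the poles of $\cosh^{-1}$, sitting where $\pi zx/k = \pi i(\nu-\tfrac{1}{6})/k - i\pi(n+\tfrac{1}{2})$, lies inside the deformation region. Once these analytic points are settled, all remaining estimates reduce to the elementary $|\cosh(\alpha+i\beta)|$ identity together with the bookkeeping above.
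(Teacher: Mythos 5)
The paper offers no proof of this lemma (it is quoted from \cite{BringmannMahlburg}), so I compare your argument against what the lemma must deliver for the rest of the paper. Your contour rotation $x\mapsto\sqrt{b/3}\,y/z$, the resulting identity for $\mathcal{J}_{b,k,\nu}$, and the identification of $\mathcal{J}^{*}_{b,k,\nu}$ with the piece $|y|\le 1$ are correct and are the standard route; your pole/sector check for the deformation is also sound. Case (i) is essentially fine, although ``uniformly close on the effective support'' should be replaced by a quantitative statement valid on all of $\R$ and uniform in $k$ (e.g.\ since $|\Re\sqrt{z}|\ge|\Im\sqrt{z}|$ for $\Re z>0$, writing the $\cosh$ argument as $\alpha'+i\beta'$ one gets $\sinh^2\alpha'+\cos^2\beta'\ge\delta^2+\left(|\cos\tfrac{\pi}{k}(\nu-\tfrac16)|-\delta\right)^2\ge\tfrac12\cos^2\tfrac{\pi}{k}(\nu-\tfrac16)$ with $\delta$ the perturbation of $\beta'$); uniformity in $k$ matters because the lemma is applied summed over all $k\le N$.

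The genuine gap is the tail estimate in case (ii). From $\Re(1/z)>0$ alone you only get $|e^{\pi b(1-y^2)/(kz)}|\le 1$, and then the tail is bounded by
$$\int_{|y|>1}\frac{dy}{\sqrt{\sinh^2\left(\frac{\pi}{k}\sqrt{\frac{b}{3}}\,y\right)+\cos^2\left(\frac{\pi}{k}\left(\nu-\frac16\right)\right)}}.$$
The $\sinh$ term only provides decay once $|y|\gg k$, so the region where you must fall back on the $|\cos|$ lower bound has length $\asymp k$, and your two-regime split yields a bound of order $k$ (times a logarithm), not $O(1/|\cos(\cdot)|)$: already for $\nu=1$ the tail integral is $\asymp k$ while the claimed bound is $O(1)$. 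A $k$-dependent constant makes the lemma useless downstream, since in $S_{42}^{[1]}$, $\mathcal{S}_{22}$, etc.\ one needs $\sum_{\nu}|\mathcal{E}_{b,k,\nu}|\ll k\log k$; an extra factor of $k$ turns $N^{-1/3+\varepsilon}$ into $N^{2/3+\varepsilon}$ and the error terms no longer vanish. The repair is to use the information actually available where the lemma is applied, namely $\Re(z^{-1})\ge\frac{k}{2}$ from \eqref{BoundOnInverseOfZ}: then on the tail $|e^{\frac{\pi b}{kz}(1-y^2)}|\le e^{-\frac{\pi b}{2}(y^2-1)}$, a Gaussian decay independent of $k$, and together with $|\cosh(\alpha+i\beta)|\ge|\cos\beta|$ this gives $|\mathcal{E}_{b,k,\nu}|\ll_b |\cos(\tfrac{\pi}{k}(\nu-\tfrac16))|^{-1}\ll|\tfrac{\pi}{2}-\tfrac{\pi}{k}(\nu-\tfrac16)|^{-1}$ with an absolute constant. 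In other words, ``as $z\to 0$'' must be interpreted in the Farey-arc regime, not merely as $\Re z>0$.
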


\section{Circle Method}
In this section we are going to derive an exact formula for $\overline{p_1}(n)$ using the Circle Method. We follow the approach used in \cite{BridgesBringmann} which is based on Rademacher \cite{RademacherJFunction}. For all $n\geq 1,$ Cauchy's Theorem yields 

$$(-1)^n\overline{p_1}(n) = \frac{1}{2\pi i} \int_{C} \frac{\overline{G_1}(q)}{q^{n+1}}dq,$$

\noindent
where we choose $C$ as the circle with radius $r=e^{-\frac{2\pi}{N^2}}$ with $N \in \N$ which we take into the limit $N \rightarrow \infty.$ Futhermore, we parametrize the circle with $q = e^{-\frac{2\pi}{N^2}+2\pi i t}$ for $0 \leq t \leq 1.$ This yields

$$(-1)^n \overline{p_1}(n) = \int_{0}^{1} \overline{G_1}\left(e^{-\frac{2\pi}{N^2}+2\pi i t}\right) \cdot e^{\frac{2\pi n}{N^2}-2\pi int} dt.$$

\noindent
We are now going to decompose the circle into standard Farey arcs. Throughout we let $0 \leq h < k \leq N$ with $\gcd(h,k) = 1.$ We define 

$$ \vartheta_{h,k}' \coloneqq \frac{1}{k(k+k_1)}, \quad \vartheta_{h,k}'' \coloneqq \frac{1}{k(k+k_2)}, \quad \vartheta_{0,1}'\coloneqq \frac{1}{N+1}$$

\noindent
where $\frac{h_1}{k_1} < \frac{h}{k} < \frac{h_2}{k_2}$ are neighbouring Farey fractions in the Farey sequence of order $N.$ For notational convenience write $z =k(N^{-2}-i\Phi)$ with $\Phi = t-\frac{h}{k},$ where we have that $-\vartheta_{h,k}' \leq \Phi \leq \vartheta_{h,k}''.$ Splitting the circle along the Farey arcs then yields

\begin{align}\label{CircleMethodFormula}
(-1)^n \overline{p_1}(n) &= \sum_{\substack{0\leq h < k \leq N \\ \gcd(h,k)=1}} e^{-\frac{2\pi i h n}{k}} \int_{-\vartheta_{h,k}'}^{\vartheta_{h,k}''} \overline{G_1}\left(e^{\frac{2\pi i}{k}(h+iz)}\right)\cdot e^{\frac{2\pi n z}{k}} d\Phi \\ 
&= \sum_{\substack{0\leq h < k \leq N \\ \gcd(h,k)=1}} e^{-\frac{2\pi i h n}{k}} \int_{-\vartheta_{h,k}'}^{\vartheta_{h,k}''} \left[g_1\left(e^{\frac{2\pi i}{k}(h+iz)}\right)+g_2\left(e^{\frac{2\pi i}{k}(h+iz)}\right)\right]\cdot e^{\frac{2\pi n z}{k}} d\Phi. \nonumber
\end{align}

We have the well-known estimate (see \cite{RademacherJFunction})

\begin{equation}
\frac{1}{kN} \ll \frac{1}{k(k+k_j)} \ll \frac{1}{kN},
\end{equation}

\noindent
for $j=1,2,$ as well as the bound 

\begin{equation}\label{BoundOnInverseOfZ}
\re{z^{-1}}= \frac{N^{-2}}{kN^{-4}+k\Phi^2} \geq \frac{N^2}{k+N^2k^{-1}} \geq \frac{k}{2}.
\end{equation}

In the following we are going to split the sum in \eqref{CircleMethodFormula} according to the different values of $\gcd(4,k)$ and we will estimate these contributions individually. We write following the notation of \cite{BridgesBringmann} 

$$(-1)^n\overline{p_1}(n) = \sum_{4}+\sum_{2}+\sum_{1},$$

\noindent
where $\sum_d$ denotes the part of the sum \eqref{CircleMethodFormula} where $\gcd(4,k)=d.$ To improve our estimates we will (following again \cite{BridgesBringmann}) further decompose the integral along the Farey arc as follows

\begin{equation}\label{Splitting}
\int_{-\vartheta_{h,k}'}^{\vartheta_{h,k}''} =  \int_{-\frac{1}{k(k+N)}}^{\frac{1}{k(k+N)}} +\int_{-\frac{1}{k(k+k_1)}}^{-\frac{1}{k(k+N)}} + \int_{\frac{1}{k(k+N)}}^{\frac{1}{k(k+k_2)}}.
\end{equation}

\noindent
Furthermore, we use the refined decomposition

$$\int_{-\frac{1}{k(k+k_1)}}^{-\frac{1}{k(k+N)}} = \sum_{\ell = k +k_1}^{k+N-1} \int_{-\frac{1}{k\ell}}^{-\frac{1}{k(\ell + 1)}}.$$

\noindent
Thus, we obtain that (see \cite{BridgesBringmann})

\begin{equation}\label{Splitting2}
\sum_{\substack{0\leq h < k \leq N \\ \gcd(h,k)=1 \\ \gcd(4,k)=d}} \int_{-\frac{1}{k(k+k_1)}}^{-\frac{1}{k(k+N)}} = \sum_{\substack{1\leq k \leq N \\ \gcd(4,k)=d}} \sum_{\ell = N+1}^{k+N-1} \sum_{\substack{0\leq h < k \\ \gcd(h,k)=1 \\ N < k+k_1 \leq \ell}} \int_{-\frac{1}{k\ell}}^{-\frac{1}{k(\ell + 1)}}.
\end{equation}

We will use the obvious analogue decomposition for the last integral in \eqref{Splitting}. Before we start estimating we make three final definitions to simplify notation. We define following again \cite{BridgesBringmann} $a(n)$ and $r(n)$ by

$$ \sum_{n \geq 0} a(n)q^n = \frac{1}{2}\xi(q)f(q) = g_1(q), \quad \sum_{n \geq 0} r(n)q^n = \xi(q).$$

In what follows we will estimate the sums $\sum_d$ for $d=1,2,4$. These computations are analogous to the ones presented in \cite{BridgesBringmann}. We start with $\sum_4.$ We have three sums, the first two coming from $g_1(q)$ and the last one from $g_2(q).$

\begin{align*}
S_{41} &\coloneqq \sum_{\substack{0\leq h < k \leq N \\ \gcd(h,k)=1 \\ 4 | k}} \frac{\omega_{h,\frac{k}{2}}^4}{\omega_{h,\frac{k}{4}}^2} (-1)^{\frac{k}{2}+1} e^{\pi i \left(\frac{h'}{2}-\frac{3h'k}{4}\right)-\frac{2\pi inh}{k}} \int_{-\vartheta_{h,k}'}^{\vartheta_{h,k}''}e^{\frac{2\pi n z}{k}}g_1(q_1) d\Phi, \\
S_{42} &\coloneqq \sum_{\substack{0\leq h < k \leq N \\ \gcd(h,k)=1 \\ 4 | k}} \frac{\omega_{h,\frac{k}{2}}^4}{k\omega_{h,\frac{k}{4}}^2} e^{-\frac{2\pi inh}{k}} \sum_{\nu \pmod{k}} (-1)^{\nu} e^{\frac{\pi i h'(-3\nu^2+\nu)}{k}} \int_{-\vartheta_{h,k}'}^{\vartheta_{h,k}''} e^{\frac{2\pi nz}{k}} e^{-\frac{\pi}{12kz}} \xi(q_1)zI_{k,\nu}(z) d\Phi, \\
S_{43} &\coloneqq \sum_{\substack{0\leq h < k \leq N \\ \gcd(h,k)=1 \\ 4 | k}} \frac{\omega_{h,\frac{k}{2}}^6}{\omega_{h,k}^4\omega_{h,\frac{k}{4}}^2} e^{-\frac{2\pi inh}{k}} \int_{-\vartheta_{h,k}'}^{\vartheta_{h,k}''} e^{\frac{2\pi nz}{k}} g_2(q_1) d\Phi.
\end{align*}

We start by estimating $S_{41}.$ We use the splittings \eqref{Splitting} and \eqref{Splitting2} to decompose $S_{41}$ into three sums $S_{41}^{[1]}, S_{41}^{[2]}$ and $S_{41}^{[3]}.$ We find

$$S_{41}^{[1]} = \sum_{\substack{1 \leq k \leq N \\ 4|k}} (-1)^{\frac{k}{2}+1} \sum_{\substack{0\leq h < k\\ \gcd(h,k)=1}} \frac{\omega_{h,\frac{k}{2}}^4}{\omega_{h,\frac{k}{4}}^2} e^{\frac{\pi i h'}{2}\left(1-\frac{3k}{2}\right)-\frac{2\pi i nh}{k}} \int_{-\frac{1}{k(k+N)}}^{\frac{1}{k(k+N)}} e^{\frac{2\pi nz}{k}} \sum_{m \geq 0} a(m) e^{\frac{2\pi i m}{k}\left(h' + \frac{i}{z}\right)} d\Phi$$

$$ = \sum_{\substack{1 \leq k \leq N \\ 4|k}} (-1)^{\frac{k}{2}+1} \sum_{m \geq 0} a(m) K_k^{[41]}(n,m) \int_{-\frac{1}{k(k+N)}}^{\frac{1}{k(k+N)}} e^{\frac{2\pi}{k}(nz-\frac{m}{z})} d\Phi.$$

Recall that $\re{z}= \frac{k}{N^2}$ and from \eqref{BoundOnInverseOfZ} we infer $\re{z^{-1}} \geq \frac{k}{2}.$ Together with Lemma \ref{BoundsOnKloostermanSumsCase4|K} this implies the bound 

$$S_{41}^{[1]} \ll \sum_{1 \leq k \leq N} \frac{n^{\frac{1}{3}}}{k^{\frac{1}{3}-\varepsilon}(k+N)} \rightarrow 0, \quad (N \rightarrow \infty).$$

For $S_{41}^{[2]}$ we obtain with the same calculations 

$$S_{41}^{[2]} = \sum_{\substack{1 \leq k \leq N \\ 4|k}} (-1)^{\frac{k}{2}+1} \sum_{m \geq 0} a(m) \sum_{\ell = N+1}^{k+N-1} \mathbb{K}_{k,\ell}^{[41]}(n,m) \int_{-\frac{1}{k\ell}}^{-\frac{1}{k(\ell+1)}} e^{\frac{2\pi}{k}\left(nz-\frac{m}{z}\right)} d\Phi.$$

Bounding this as before with Lemma \ref{BoundsOnKloostermanSumsCase4|K} shows $S_{41}^{[2]} \rightarrow 0$ as $N \rightarrow \infty.$ The contribution $S_{41}^{[3]}$ is bounded in the exact same way as $S_{41}^{[2]}.$

Next, we are going to bound $S_{42}.$ We invoke the splittings \eqref{Splitting} and \eqref{Splitting2} to decompose $S_{42}$ into the three sums $S_{42}^{[1]},S_{42}^{[2]}$ and $S_{42}^{[3]}.$ We have as before

$$S_{42}^{[1]} = \sum_{\substack{1 \leq k \leq N \\ 4|k}} \frac{1}{k} \sum_{m \geq 0} r(m) \sum_{\nu \pmod{k}} (-1)^{\nu} K_k^{[42]}(\nu,n,m) \int_{-\frac{1}{k(k+N)}}^{\frac{1}{k(k+N)}} e^{\frac{2\pi}{k}\left(nz-\frac{m}{z}\right)}e^{-\frac{\pi}{12kz}}zI_{k,\nu}(z) d\Phi.$$

\noindent
We use as before the bounds $\re{z} = \frac{k}{N^2}$ and $\re{z^{-1}} \geq \frac{k}{2}$ as well as Lemma \ref{BoundsOnKloostermanSumsCase4|K} and Lemma \ref{BoundIntegral} to obtain as $N \rightarrow \infty,$

\begin{align*}
S_{42}^{[1]} &\ll \sum_{\substack{1 \leq k \leq N \\ 4|k}} \frac{1}{k} e^{\frac{2\pi n}{N^2}} \sum_{\nu \pmod{k}} n^{\frac{1}{3}}k^{\frac{2}{3}+\varepsilon} \int_{-\frac{1}{k(k+N)}}^{\frac{1}{k(k+N)}} \left|\mathcal{J}_{-\frac{1}{12},k,\nu}(z)\right| d\Phi \\
&\ll n^{\frac{1}{3}} \sum_{1 \leq k \leq N} \frac{k^{-\frac{1}{3}+\varepsilon}}{k(k+N)} \sum_{\nu \pmod{k}} \frac{1}{\left|\frac{\pi}{2} - \frac{\pi}{k}\left(\nu - \frac{1}{6}\right) \right|} \ll \frac{n^{\frac{1}{3}}}{N} \sum_{k=1}^{N} k^{-\frac{1}{3}+\varepsilon} \log(k)\\
&\ll n^{\frac{1}{3}}N^{-\frac{1}{3}+\varepsilon}\log(N) \rightarrow 0.
\end{align*}

\noindent
We bound $S_{42}^{[2]}$ in the same way by replacing the Kloosterman sum $K_k^{[42]}$ with the sum of the Kloostermans sums $\mathbb{K}_{k,\ell}^{[42]}(\nu,n,m).$ The sum $S_{42}^{[3]}$ is bounded exactly like $S_{42}^{[2]}.$

Finally, the contribution $S_{43} \rightarrow 0$ as $N \rightarrow \infty$ by the same argument as used for $S_{41},$ changing the Kloosterman sums $K_k^{[41]}$ and $\mathbb{K}_{k,\ell}^{[41]}$ to the Kloosterman sums $K_k^{[43]}$ and $\mathbb{K}_{k,\ell}^{[43]}.$ In total, we obtain that $\sum_{4} \rightarrow 0,$ as $N\rightarrow \infty.$

We continue to estimate $\sum_{2}.$ Thus, suppose that $\gcd(4,k)=2.$ We decompose $\sum_{2}$ into three sums

\begin{align*}
S_{21} &\coloneqq \frac{1}{4} \sum_{\substack{0\leq h < k \leq N \\ \gcd(h,k)=1 \\ \gcd(4,k)=2}} \frac{\omega_{h,\frac{k}{2}}^4}{\omega_{2h,\frac{k}{2}}^2} (-1)^{\frac{k}{2}+1}e^{\frac{\pi ih'}{2}\left(1-\frac{3k}{2}\right)-\frac{2\pi inh}{k}} \int_{-\vartheta_{h,k}'}^{\vartheta_{h,k}''} e^{\frac{2\pi nz}{k}+\frac{\pi}{2kz}} \frac{P(q_1^2)^4}{P(q_1)^3}f(q_1)d\Phi,\\
S_{22} &\coloneqq \frac{1}{2} \sum_{\substack{0\leq h < k \leq N \\ \gcd(h,k)=1 \\ \gcd(4,k)=2}} \frac{\omega_{h,\frac{k}{2}}^4}{k\omega_{2h,\frac{k}{2}}^2} e^{-\frac{2\pi inh}{k}} \sum_{\nu \pmod{k}} (-1)^{\nu} e^{\frac{\pi ih'(-3\nu^2+\nu)}{k}} \int_{-\vartheta_{h,k}'}^{\vartheta_{h,k}''} e^{\frac{2\pi nz}{k}+\frac{5\pi}{12kz}} \frac{P(q_1^2)^4}{P(q_1)^3} zI_{k,\nu}(z)d\Phi,\\
S_{23} &\coloneqq \frac{1}{4} \sum_{\substack{0\leq h < k \leq N \\ \gcd(h,k)=1 \\ \gcd(4,k)=2}} \frac{\omega_{h,\frac{k}{2}}^6}{\omega_{h,k}^4\omega_{2h,\frac{k}{2}}^2}e^{-\frac{2\pi inh}{k}} \int_{-\vartheta_{h,k}'}^{\vartheta_{h,k}''} e^{\frac{2\pi nz}{k}+\frac{\pi}{2kz}} \frac{P(q_1^2)^6}{P(q_1)^6} d\Phi.
\end{align*}

Eventhough, on a first glance $S_{21}$ and $S_{23}$ individually make a significant contribution to the overall sum, their principal parts do in fact cancel each other. This follows from the fact that they grow with the same factor of $e^{\frac{\pi}{2kz}}$ and 

\begin{equation}\label{MultiplierSumGCD(4,k)=2}
\frac{\omega_{h,\frac{k}{2}}^4}{\omega_{2h,\frac{k}{2}}^2} (-1)^{\frac{k}{2}+1}e^{\frac{\pi ih'}{2}\left(1-\frac{3k}{2}\right)} =- \frac{\omega_{h,\frac{k}{2}}^6}{\omega_{h,k}^4\omega_{2h,\frac{k}{2}}^2}.
\end{equation}

To see this, rewrite the equation as 

$$(-1)^{\frac{k}{2}}e^{\frac{\pi ih'}{2}\left(1-\frac{3k}{2}\right)} = \frac{\omega_{h,\frac{k}{2}}^2}{\omega_{h,k}^4}.$$

\noindent
Since $\gcd(4,k)=2$ we use the definition of $\omega_{h,k}$ and obtain that

$$\frac{\omega_{h,\frac{k}{2}}^2}{\omega_{h,k}^4} = e^{\frac{\pi i}{4}(4-h'k+h^2h'k-h(2+k))},$$

\noindent
which further simplifies, by using $hh' \equiv -1 \pmod{16k},$ to

$$=(-1)e^{-\frac{\pi i}{4}(h(2+k))}.$$

\noindent
Using again that $\gcd(4,k)=2$ and $hh' \equiv -1 \pmod{16k}$ we recognize $(-1)=(-1)^{\frac{k}{2}}$ and furthermore $e^{-\frac{\pi i}{4}(h(2+k))} = e^{\frac{\pi ih'}{2}\left(1-\frac{3k}{2}\right)},$ thus proving \eqref{MultiplierSumGCD(4,k)=2}. 

We now continue to estimate $S_{22}.$ The non-principal part vanishes as for $S_{42}$ in the limit $N\rightarrow \infty.$ We are left with 

$$\mathcal{S}_{22} \coloneqq\frac{1}{2} \sum_{\substack{0\leq h < k \leq N \\ \gcd(h,k)=1 \\ \gcd(4,k)=2}} \frac{\omega_{h,\frac{k}{2}}^4}{k\omega_{2h,\frac{k}{2}}^2} e^{-\frac{2\pi inh}{k}} \sum_{\nu \pmod{k}} (-1)^{\nu} e^{\frac{\pi ih'(-3\nu^2+\nu)}{k}} \int_{-\vartheta_{h,k}'}^{\vartheta_{h,k}''} e^{\frac{2\pi nz}{k}} \mathcal{J}_{\frac{5}{12},k,\nu}(z) d\Phi.$$

By Lemma \ref{BoundIntegral} (ii) we can write $\mathcal{J}_{\frac{5}{12},k,\nu}(z) = \mathcal{J}_{\frac{5}{12},k,\nu}^{*}(z) + \mathcal{E}_{\frac{5}{12},k,\nu},$ where the contribution from $\mathcal{E}_{\frac{5}{12},k,\nu}$ vanishes as before when $N \rightarrow \infty.$ We decompose $\mathcal{S}_{22}$ into $\mathcal{S}_{22}^{[1]}, \mathcal{S}_{22}^{[2]}$ and $\mathcal{S}_{22}^{[3]}$ using the splittings \eqref{Splitting} and \eqref{Splitting2}. The estimate of $\mathcal{S}_{22}^{[1]}$ below in particular is essentially the same as in \cite{BridgesBringmann} based on the ideas of Rademacher \cite{RademacherJFunction}.

Recall that $z=k(N^{-2}-i\Phi).$ We have for $b>0$

$$\int_{-\frac{1}{k(k+N)}}^{\frac{1}{k(k+N)}} e^{\frac{2\pi nz}{k}}\mathcal{J}_{b,k,\nu}^{*}(z) d\Phi = \int_{-\frac{1}{k(k+N)}}^{\frac{1}{k(k+N)}} e^{\frac{2\pi nz}{k}} \sqrt{\frac{b}{3}}\int_{-1}^{1} \frac{e^{\frac{\pi b}{kz}(1-x^2)}}{\cosh\left(\frac{\pi i}{k}\left(\nu-\frac{1}{6}\right)-\frac{\pi}{k}\sqrt{\frac{b}{3}}x\right)} dx d\Phi$$

$$=\frac{\sqrt{b}}{ik\sqrt{3}} \int_{-1}^{1}\frac{1}{\cosh\left(\frac{\pi i}{k}\left(\nu-\frac{1}{6}\right)-\frac{\pi}{k}\sqrt{\frac{b}{3}}x\right)} \int_{\frac{k}{N^2}-\frac{i}{k+N}}^{\frac{k}{N^2}+\frac{i}{k+N}} e^{\frac{\pi b}{kz}(1-x^2) +\frac{2\pi nz}{k}} dzdx.$$

After the change of variables $w=\frac{z}{k}$ the last expression equals
$$=\frac{\sqrt{b}}{i\sqrt{3}} \int_{-1}^{1}\frac{1}{\cosh\left(\frac{\pi i}{k}\left(\nu-\frac{1}{6}\right)-\frac{\pi}{k}\sqrt{\frac{b}{3}}x\right)} \int_{\frac{1}{N^2}-\frac{i}{k(k+N)}}^{\frac{1}{N^2}+\frac{i}{k(k+N)}} e^{2\pi nw + \frac{2\pi}{k^2w}\frac{b}{2}(1-x^2)} dwdx.$$

Thus, we find that

\begin{align*}
\mathcal{S}_{22}^{[1]} &= \frac{\sqrt{5}\pi}{6} \sum_{\substack{1\leq k \leq N \\ \gcd(4,k)=2}} \frac{1}{k} \sum_{\nu \pmod{k}} (-1)^{\nu}K_k^{[22]}(\nu,n)\int_{-1}^{1} \frac{L_k\left(n,\frac{5}{24}\left(1-x^2\right)\right)}{\cosh\left(\frac{\pi i}{k}\left(\nu-\frac{1}{6}\right)-\frac{\pi}{k}\sqrt{\frac{5}{36}}x\right)} dx\\
&- \frac{\sqrt{5}\pi}{6} \sum_{\substack{1\leq k \leq N \\ \gcd(4,k)=2}} \frac{1}{k} \sum_{\nu \pmod{k}} (-1)^{\nu}K_k^{[22]}(\nu,n)\int_{-1}^{1} \frac{1}{\cosh\left(\frac{\pi i}{k}\left(\nu-\frac{1}{6}\right)-\frac{\pi}{k}\sqrt{\frac{5}{36}}x\right)} \\
&\times\left(\mathcal{E}_k^{[1]}\left(n,\frac{5}{24}\left(1-x^2\right)\right)+\mathcal{E}_k^{[2]}\left(n,\frac{5}{24}\left(1-x^2\right)\right)+\mathcal{E}_k^{[3]}\left(n,\frac{5}{24}\left(1-x^2\right)\right)\right) dx,
\end{align*}

\noindent
where we denote by the $R$ the rectangle with edges $\pm \frac{1}{N^2} \pm \frac{i}{k(k+N)}$ around $0$ with counterclockwise orientation and set

\begin{align*}
L_k(n,y) &:= \frac{1}{2\pi i} \int_{R} e^{2\pi nw + \frac{2\pi y}{k^2w}}dw, \quad \mathcal{E}_k^{[1]}(n,y) := \frac{1}{2\pi i} \int_{\frac{1}{N^2}+\frac{i}{k(k+N)}}^{-\frac{1}{N^2}+\frac{i}{k(k+N)}} e^{2\pi nw + \frac{2\pi y}{k^2w}}dw, \\
\mathcal{E}_k^{[2]}(n,y) &:= \frac{1}{2\pi i} \int_{-\frac{1}{N^2}+\frac{i}{k(k+N)}}^{-\frac{1}{N^2}-\frac{i}{k(k+N)}} e^{2\pi nw + \frac{2\pi y}{k^2w}}dw, \quad \mathcal{E}_k^{[3]}(n,y) := \frac{1}{2\pi i} \int_{-\frac{1}{N^2}-\frac{i}{k(k+N)}}^{\frac{1}{N^2}-\frac{i}{k(k+N)}} e^{2\pi nw + \frac{2\pi y}{k^2w}}dw.	
\end{align*}

We first bound $\mathcal{E}_k^{[1]}$ and $\mathcal{E}_k^{[3]}.$ On the horizontal edges of $R$ we have, by estimates of Rademacher \cite{RademacherJFunction}

$$ w = u \pm \frac{i}{k(k+N)}, \quad -\frac{1}{N^2} \leq u \leq \frac{1}{N^2}, \quad \re{w} = u \leq \frac{1}{N^2}, \quad \re{\frac{1}{w}} \leq 4k^2.$$ 

Hence,

$$\left|\mathcal{E}_k^{[1]}\left(n,\frac{5}{24}\left(1-x^2\right)\right)\right|, \left|\mathcal{E}_k^{[3]}\left(n,\frac{5}{24}\left(1-x^2\right)\right)\right| \leq \frac{1}{N^2\pi}e^{\frac{5\pi}{3}\left(1-x^2\right)+\frac{2\pi n}{N^2}}.$$

Following again Rademacher \cite{RademacherJFunction} we have on the vertical part of $R$

$$ w = -\frac{1}{N^2} +iv, \quad -\frac{1}{k(k+N)} \leq v \leq \frac{1}{k(k+N)}, \quad \re{w}<0, \quad \re{\frac{1}{w}} < 0.$$

Thus,

$$\left|\mathcal{E}_k^{[2]}\left(n,\frac{5}{24}\left(1-x^2\right)\right)\right| \leq \frac{1}{\pi kN}.$$

These estimates together with Lemma \ref{BoundsOnKloostermanSumsCaseGCD(4,k)=2} imply $\mathcal{E}_k^{[1]},\mathcal{E}_k^{[2]}$ and $\mathcal{E}_k^{[3]}$ contribute at most

$$\ll e^{\frac{2\pi n}{N^2}} \sum_{k=1}^{N} \frac{1}{k} \sum_{\nu = 1}^{k} k^{\frac{2}{3}+\varepsilon}n^{\frac{1}{3}} \int_{-1}^{1} \frac{1}{\left|\cosh\left(\frac{\pi i}{k}\left(\nu-\frac{1}{6}\right)-\frac{\pi}{k}\sqrt{\frac{5}{36}}x\right)\right|} \frac{1}{kN} e^{\frac{5\pi}{3}\left(1-x^2\right)} dx.$$

\noindent
Following \cite{BridgesBringmann} we have for $\alpha \geq 0$ and $0<\beta < \pi$ the bound

$$|\cosh(\alpha +i \beta)| \geq \left|\sin\left(\frac{\pi}{2}-\beta\right)\right| \gg \left|\frac{\pi}{2}-\beta\right|.$$

\noindent
Since $\nu-\frac{1}{6} > 0$ (choosing here the representative system $\nu = 1 \dots k$ for $\nu \pmod{k}$) we can bound the above as $N\rightarrow \infty$ by

\begin{align*}
&\ll \frac{n^{\frac{1}{3}}}{N} \sum_{k=1}^{N} k^{-\frac{4}{3}+\varepsilon} \sum_{\nu=1}^{k} \frac{1}{\left|\frac{\pi}{2}-\frac{\pi}{k}\left(\nu-\frac{1}{6}\right)\right|} \int_{-1}^{1} e^{\frac{5\pi}{3}\left(1-x^2\right)} dx \ll \frac{n^{\frac{1}{3}}}{N} \sum_{k=1}^{N} k^{-\frac{1}{3}+\varepsilon} \log(k)\\
&\ll n^{\frac{1}{3}}N^{-\frac{1}{3}+\varepsilon}\log(N) \rightarrow 0.
\end{align*}

We now see as in \cite{BridgesBringmann} that

$$L_k(n,y) = \frac{1}{k} \sqrt{\frac{y}{n}} I_1\left(\frac{4\pi \sqrt{ny}}{k}\right).$$

\noindent
Indeed, by the Residue Theorem we have

$$L_k(n,y) = \text{Res}_{w=0} \; e^{2\pi nw + \frac{2\pi y}{k^2w}}.$$

\noindent
Using the series expansion of the exponential function we find that 

\begin{align*}
\text{Res}_{w=0} \; e^{2\pi nw + \frac{2\pi y}{k^2w}} &= \sum_{m \geq 0} \frac{1}{m!(m+1)!} (2\pi)^{2m+1} \frac{n^my^{m+1}}{k^{2m+2}} \\
&= \frac{1}{k}\sqrt{\frac{y}{n}} \sum_{m \geq 0} \frac{1}{m!(m+1)!} (2\pi)^{2m+1}\frac{n^{m+\frac{1}{2}}y^{m+\frac{1}{2}}}{k^{2m+1}}.
\end{align*}

\noindent
We recognize the last sum as the series representation of the Bessel function \cite{Temme}

$$I_{\ell}(z) \coloneqq \sum_{m \geq 0} \frac{1}{m!\Gamma(m+\ell+1)}\left(\frac{x}{2}\right)^{2m+\ell},$$
which proves the claim. Therefore, as $N \rightarrow \infty$ we find

$$\mathcal{S}_{22}^{[1]} = \frac{5\pi}{12\sqrt{6n}} \sum_{\substack{k\geq 1 \\ \gcd(4,k)=2}} \frac{1}{k^2} \sum_{\nu \pmod{k}} (-1)^{\nu} K_k^{[22]}(\nu,n) \mathcal{I}_{\frac{5}{12},k,\nu}(n).$$

We continue to bound $\mathcal{S}_{22}^{[2]}.$ The contribution of $\mathcal{S}_{22}^{[3]}$ can be bounded in the exact same way. We find

\begin{align*}
\mathcal{S}_{22}^{[2]} &= \frac{1}{2}\sum_{\substack{1\leq k \leq N \\ \gcd(4,k)=2}} \frac{1}{k} \sum_{\nu \pmod{k}} (-1)^{\nu} \sum_{\ell = N+1}^{k+N-1} \mathbb{K}_{k,\ell}^{[22]}(v,n)\sqrt{\frac{5}{12\cdot 3}}\\
&\times \int_{-1}^{1} \frac{1}{\left|\cosh\left(\frac{\pi i}{k}\left(\nu-\frac{1}{6}\right)-\frac{\pi}{k}\sqrt{\frac{5}{36}}x\right)\right|} \int_{-\frac{1}{k\ell}}^{-\frac{1}{k(\ell+1)}} e^{2\pi nw + \frac{5\pi}{12k^2w}\left(1-x^2\right)}d\Phi dx.
\end{align*}

Following Rademacher \cite{RademacherJFunction} we bound

$$\re{2\pi nw + \frac{5\pi}{12k^2w}\left(1-x^2\right)} \leq \frac{2\pi n}{N^2} + \frac{5\pi}{3}\left(1-x^2\right).$$

Thus, using Lemma \ref{BoundsOnKloostermanSumsCaseGCD(4,k)=2}  we find as $N\rightarrow \infty,$ 

\begin{align*}
\mathcal{S}_{22}^{[2]} &\ll e^{\frac{2\pi n}{N^2}} \sum_{k=1}^{N} \frac{1}{k} \sum_{\nu=1}^{k} \sum_{\ell = N+1}^{k+N-1} k^{\frac{2}{3}+\varepsilon}n^{\frac{1}{3}} \int_{-1}^{1} \frac{e^{\frac{5\pi}{3}\left(1-x^2\right)}}{\left|\cosh\left(\frac{\pi i}{k}\left(\nu-\frac{1}{6}\right)-\frac{\pi}{k}\sqrt{\frac{5}{36}}x\right)\right|} dx \int_{-\frac{1}{k\ell}}^{-\frac{1}{k(\ell+1)}} d\Phi\\
&\ll n^{\frac{1}{3}} \sum_{k=1}^{N} k^{-\frac{1}{3}+\varepsilon} k\log(k) \frac{1}{k(N+k)} \ll n^{\frac{1}{3}}N^{-\frac{1}{3}+\varepsilon} \log(N) \rightarrow 0.
\end{align*}

Therefore, in total as $N\rightarrow \infty$ we find that 

$$\sum_2 = \frac{5\pi}{12\sqrt{6n}} \sum_{\substack{k\geq 1 \\ \gcd(4,k)=2}} \frac{1}{k^2} \sum_{\nu \pmod{k}} (-1)^{\nu} K_k^{[22]}(\nu,n) \mathcal{I}_{\frac{5}{12},k,\nu}(n).$$

Finally, we have to evaluate $\sum_{1}.$ Thus, we assume that $\gcd(4,k)=1.$ We have the following three sums

\begin{align*}
S_{11} &\coloneqq \sum_{\substack{0\leq h < k \leq N \\ \gcd(h,k)=1 \\ \gcd(4,k)=1}} \frac{\omega_{2h,k}^4}{\omega_{4h,k}^2} (-1)^{\frac{k-1}{2}} e^{\frac{3\pi ih'}{4k}-\frac{2\pi inh}{k}} \int_{-\vartheta_{h,k}'}^{\vartheta_{h,k}''} e^{\frac{2\pi nz}{k}-\frac{5\pi}{8kz}} \frac{P\left(q_1^{\frac{1}{2}}\right)^4}{P(q_1)P\left(q^{\frac{1}{4}}\right)^2} \cdot \omega\left(q^{\frac{1}{2}}\right) d\Phi, \\
S_{12} &\coloneqq \sum_{\substack{0\leq h < k \leq N \\ \gcd(h,k)=1 \\ \gcd(4,k)=1}} \frac{\omega_{2h,k}^4}{k\omega_{4h,k}^2} e^{-\frac{2\pi inh}{k}} \sum_{\nu \pmod{k}} (-1)^{\nu} e^{\frac{\pi ih'(-3\nu^2-\nu)}{k}} \int_{-\vartheta_{h,k}'}^{\vartheta_{h,k}''} e^{\frac{2\pi nz}{k}+\frac{\pi}{24kz}} \frac{P\left(q_1^{\frac{1}{2}}\right)^4}{P(q_1)P\left(q^{\frac{1}{4}}\right)^2} zI_{k,\nu}(z) d\Phi, \\
S_{13} &\coloneqq \sum_{\substack{0\leq h < k \leq N \\ \gcd(h,k)=1 \\ \gcd(4,k)=1}} \frac{\omega_{2h,k}^6}{\omega_{h,k}^4\omega_{4h,k}^2} e^{-\frac{2\pi inh}{k}} \int_{-\vartheta_{h,k}'}^{\vartheta_{h,k}''} e^{\frac{2\pi nz}{k}-\frac{\pi}{8kz}} \frac{P\left(q_1^{\frac{1}{2}}\right)^6}{P(q_1)^4P\left(q_1^{\frac{1}{4}}\right)^2}d\Phi.
\end{align*}

Similar to $S_{41}$ and $S_{43}$ we can show that as $N \rightarrow \infty$ we have $S_{11}\rightarrow 0$ and $S_{13} \rightarrow 0.$ Furthermore for $S_{12}$ the non-principal part vanishes as $N\rightarrow \infty$ and we are left with 

$$\mathcal{S}_{12} = \sum_{\substack{0\leq h < k \leq N \\ \gcd(h,k)=1 \\ \gcd(4,k)=1}} \frac{\omega_{2h,k}^4}{k\omega_{4h,k}^2} e^{-\frac{2\pi nh}{k}} \sum_{\nu \pmod{k}} (-1)^{\nu} e^{\frac{\pi ih'(-3\nu^2-\nu)}{k}} \int_{-\vartheta_{h,k}'}^{\vartheta_{h,k}''} e^{\frac{2\pi nz}{k}} \mathcal{J}_{\frac{1}{24},k,\nu} d\Phi.$$

\noindent
Similar to $\mathcal{S}_{22},$ using Lemma \ref{BoundsOnKloostermanSumsCaseGCD(4,k)=1}, one can show that as $N \rightarrow \infty$

$$\sum_{1}= \frac{\pi}{12\sqrt{6n}} \sum_{\substack{k\geq 1 \\ \gcd(4,k)=1}} \frac{1}{k^2} \sum_{\nu \pmod{k}} (-1)^{\nu} K_k^{[12]}(\nu,n) \mathcal{I}_{\frac{1}{24},k,\nu}(n).$$

Combining our results we obtain our main Theorem \ref{maintheorem}. It is easy to see that the summand $k=2$ dominates the rest of the sum and thus we get as a consequence the following asymptotics as $n \rightarrow \infty$

$$\overline{p_1}(n) \sim \frac{5\pi}{48\sqrt{6n}} \sum_{\nu =0,1} (-1)^{n+\nu} K_2^{[22]}(\nu,n)\mathcal{I}_{\frac{5}{12},2,\nu}(n).$$

\noindent
We can simplify the situation quite a bit, by recognizing that

$$K_2^{[22]}(0,n) = (-1)^n, \quad K_2^{[22]}(1,n) = (-1)^{n+1}.$$

\noindent
Hence, we see that this removes the twist by the root of unity introtuced at the beginning and we find

$$\overline{p_1}(n) \sim \frac{5\pi}{48\sqrt{6n}} \left(\mathcal{I}_{\frac{5}{12},2,0}(n)+\mathcal{I}_{\frac{5}{12},2,1}(n)\right), \quad (n\rightarrow \infty).$$

\noindent
In \cite{Mauth} the author used analytic methods to find a strong asymptotic formula for $\mathcal{I}_{\frac{1}{18},1,0}(n).$ The same methods can be applied here to find asymptotic formulas for $\mathcal{I}_{\frac{5}{12},2,0}(n)$ and $\mathcal{I}_{\frac{5}{12},2,1}(n)$ and conclude that the sum in Theorem \ref{maintheorem} actually converges. We leave out the details. This gives another proof of \eqref{Asymptotics} which was shown before by Bringmann, Holroyd, Mahlburg, and Vlasenko \cite{BringmannHolroydMahlburgVlasenko}.

\end{document}